\documentclass[11pt,a4paper]{article}

\usepackage[margin=23mm]{geometry}
\usepackage{amsmath,amssymb,amsthm,mathtools}
\usepackage{enumitem}
\usepackage{hyperref}
\hypersetup{
colorlinks=true,
linkcolor=blue,
citecolor=blue,
urlcolor=blue
}
\usepackage{microtype}
\usepackage{authblk}
\usepackage{aliascnt}
\usepackage[noabbrev]{cleveref}
\allowdisplaybreaks

\newtheorem{theorem}{Theorem}[section]
\newtheorem{lemma}[theorem]{Lemma}
\newtheorem{proposition}[theorem]{Proposition}
\newtheorem{corollary}[theorem]{Corollary}
\newtheorem{problem}[theorem]{Problem}
\theoremstyle{remark}
\newtheorem{remark}[theorem]{Remark}

\newcommand{\R}{\mathbb R}
\newcommand{\cA}{\mathcal A}
\newcommand{\cD}{\mathcal D}
\newcommand{\cF}{\mathcal F}
\newcommand{\cG}{\mathcal G}
\newcommand{\cH}{\mathcal H}
\newcommand{\cL}{\mathcal L}
\newcommand{\Span}{\operatorname{span}}

\title{Extremal Families for Matchings in Permutations}
\author[1]{Mengyu Cao\thanks{E-mail: \texttt{myucao@ruc.edu.cn}. Supported by the National Natural Science Foundation of China (12301431) and Beijing Natural Science Foundation (1262010).}}
\author[2]{Haixiang Zhang\thanks{Corresponding author. E-mail: \texttt{zhang-hx22@mails.tsinghua.edu.cn}.}}

\affil[1]{\small Institute for Mathematical Sciences, Renmin University of China, Beijing 100086, China}
\affil[2]{\small Department of Mathematical Sciences, Tsinghua University, Beijing 100084, China}
\date{}

\begin{document}
\maketitle
\begin{abstract}
Two permutations $\sigma,\tau\in S_n$ are called disjoint if the composition
$\sigma\tau^{-1}$ has no fixed point.  If a family $\cF\subseteq S_n$
contains no $s$ pairwise disjoint permutations, then a simple averaging
argument gives $|\cF|\leq(s-1)(n-1)!$.  Inozemtsev, Kolupaev and Kupavskii
characterized the equality cases in the range
$
s\leq n/(2^{17}\log n).
$  We characterize all equality cases throughout the range $2\le s\le n$: equality holds
if and only if $\cF$ is a union of $(s-1)$ pairwise disjoint
$1$-cosets.  We also
prove the linear statement underlying this classification: a real-valued
function on $S_n$ has constant sum on every one-factorization if and only if it lies in the span of the indicators of the $1$-cosets.
The proof is combinatorial and applies to every
order, with a few small orders handled separately.
\end{abstract}

\medskip
\noindent\textbf{Keywords:} permutation; matching; one-factorization;
 Boolean function.\\
\textbf{MSC 2020:} 05D05; 05A05.

\section{Introduction}

For a family $\cA\subseteq\binom{[N]}{k}$, let $\nu(\cA)$ be the maximum
number of pairwise disjoint members of $\cA$.  The Erd\H{o}s Matching
Conjecture~\cite{ErdosMatching} asserts that, whenever $N\geq sk$ and
$\nu(\cA)<s$,
\[
   |\cA|
   \leq
   \max\left\{
      \binom{N}{k}-\binom{N-s+1}{k},
      \binom{sk-1}{k}
   \right\}.
\]
The two terms are attained, respectively, by all $k$-sets meeting a fixed
$(s-1)$-set and by all $k$-sets contained in a fixed $(sk-1)$-set.  For
$s=2$, this reduces to the classical Erd\H{o}s--Ko--Rado
theorem~\cite{ErdosKoRado}.  The Hilton--Milner theorem~\cite{HiltonMilner}
determines the largest intersecting family that is not contained in a full
star.  These matching, intersection and nontrivial-intersection problems have
analogues in many finite structures; here we consider the symmetric group.

Let $S_n$ be the symmetric group on $[n]=\{1,\ldots,n\}$.  We identify a
permutation $\sigma\in S_n$ with the perfect matching
$
   M_\sigma=\{(i,\sigma(i)):i\in[n]\}
$
of the complete bipartite graph $K_{n,n}$.  Permutations $\sigma$ and $\tau$
are \emph{disjoint} if $M_\sigma\cap M_\tau=\varnothing$.  Equivalently, the
composition $\sigma\tau^{-1}$ has no fixed point, or
$\sigma(i)\neq\tau(i)$ for every $i\in[n].$
A matching in $\cF\subseteq S_n$ is a subfamily of pairwise disjoint
permutations, and its maximum size is denoted by $\nu(\cF)$.  Thus
$\nu(\cF)<2$ means that $\cF$ is intersecting.

Deza and Frankl~\cite{FranklDeza} proved that an intersecting family in
$S_n$ has size at most $(n-1)!$.  For $1\leq t\leq n$, let
$i_1,\ldots,i_t$ be distinct elements of $[n]$, and let
$j_1,\ldots,j_t$ be distinct elements of $[n]$.  The family
\[
   S_n(i_1\mapsto j_1,\ldots,i_t\mapsto j_t)
      =\{\sigma\in S_n:\sigma(i_r)=j_r\text{ for every }r\in[t]\}
\]
is called a \emph{$t$-coset}; it has size $(n-t)!$.  In particular, a
$1$-coset prescribes the following: for $i,j\in[n]$, we
write
\[
   S_n(i\mapsto j)=\{\sigma\in S_n:\sigma(i)=j\}.
\]
It is an intersecting family of size $(n-1)!$.  Deza and Frankl conjectured
that the $1$-cosets are the only equality cases.  This was proved
independently by Cameron and Ku~\cite{CameronKu}
and by Larose and Malvenuto~\cite{LaroseMalvenuto}.
Ellis subsequently proved the Cameron--Ku stability conjecture and its
Hilton--Milner-type strengthening for all sufficiently large
$n$~\cite{EllisCameronKu}.

More generally, a family is \emph{$t$-intersecting} if any two of its
members agree in at least $t$ positions.  Every $t$-coset is
$t$-intersecting.  Ellis, Friedgut and
Pilpel~\cite{EFP} proved that, for fixed $t$ and sufficiently large $n$, a
$t$-intersecting family has size at most $(n-t)!$.  The equality
classification, also stated in their paper, follows from the stability theorem
of Ellis~\cite{EllisStability}; Filmus~\cite{FilmusComment} later pointed out
the gap in the original proof of this part.  The complete problem is delicate,
just as in the Complete Intersection Theorem of Ahlswede and
Khachatrian~\cite{AhlswedeKhachatrian} for uniform set systems.  For an integer
$r$ with $0\leq r\leq\lfloor(n-t)/2\rfloor$, let
\[
   \cG_{n,t,r}
      =\bigl\{\sigma\in S_n:
          |\{i\in[t+2r]:\sigma(i)=i\}|\geq t+r\bigr\}.
\]
The spread-approximation method introduced by Kupavskii and
Zakharov~\cite{KupavskiiZakharov} first allowed $t$ to grow almost linearly
with $n$, up to polylogarithmic losses.  Keller, Lifshitz, Minzer and
Sheinfeld~\cite{KellerLifshitzMinzerSheinfeld} subsequently proved the
$t$-coset bound, together with a stability statement, throughout the linear
range $n\geq c_0t$ for an absolute constant $c_0$.
Kupavskii~\cite{KupavskiiAlmost} determined the largest $t$-intersecting
families when $n>(1+\varepsilon)t$ and $n$ is sufficiently large in terms
of $\varepsilon$.  More recently, Keller, Kupavskii, Lifshitz and
Sheinfeld~\cite{KellerKupavskiiLifshitzSheinfeld} proved that there is an
absolute $n_0$ such that, for every $n>n_0$ and every $1\leq t\leq n$, the
maximum is attained by one of the families $\cG_{n,t,r}$, up to the natural
left and right actions of $S_n$.  Thus the complete $t$-intersection
problem is now settled in $S_n$ for all sufficiently large $n$.

Averaging over all one-factorizations $L\in\cL_n$, each of which consists
of $n$ pairwise disjoint permutations, gives
\begin{equation}
   \nu(\cF)<s
   \quad\Longrightarrow\quad
   |\cF|\leq(s-1)(n-1)!.
   \label{eq:classical-bound}
\end{equation}
There are two types of equality examples.  If $i\in[n]$ and
$J\subseteq[n]$ has size $s-1$, then
\[
   \{\sigma\in S_n:\sigma(i)\in J\}
      =\bigcup_{j\in J}S_n(i\mapsto j)
\]
has matching number at most $s-1$.  Dually, if $j\in[n]$ and
$I\subseteq[n]$ has size $s-1$, then the same is true of
\[
   \{\sigma\in S_n:\sigma^{-1}(j)\in I\}
      =\bigcup_{i\in I}S_n(i\mapsto j).
\]
Using the method of spread approximations~\cite{KupavskiiZakharov},
Inozemtsev, Kolupaev and
Kupavskii~\cite{InozemtsevKolupaevKupavskii} proved that these are
the only equality cases in the range
$
   s\leq n/(2^{17}\log n);
$
their approach also yields stability and Hilton--Milner-type results.  Our
first theorem settles the equality problem throughout the natural range
$n\geq s\geq2$.

\begin{theorem}\label{thm:extremal}
Let $n\geq s\geq2$, and let $\cF\subseteq S_n$ satisfy
$\nu(\cF)<s$.  Then
\[
   |\cF|\leq(s-1)(n-1)!.
\]
Equality holds if and only if one of the following holds:
\begin{enumerate}[label=\textup{(\roman*)}]
\item there are $i\in[n]$ and $J\subseteq[n]$, with $|J|=s-1$, such that
$
   \cF=\{\sigma\in S_n:\sigma(i)\in J\};
$
\item there are $j\in[n]$ and $I\subseteq[n]$, with $|I|=s-1$, such that
$
   \cF=\{\sigma\in S_n:\sigma^{-1}(j)\in I\}.
$
\end{enumerate}

\end{theorem}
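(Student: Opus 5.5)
The bound itself is the averaging argument \eqref{eq:classical-bound}. Every one-factorization $L\in\cL_n$ (a Latin square, i.e.\ $n$ pairwise disjoint permutations covering all edges of $K_{n,n}$) meets $\cF$ in at most $s-1$ members, and every permutation lies in the same number of one-factorizations. Hence $|\cF|\le (s-1)n!/n$.

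For the equality case, equality forces $|\cF\cap L|=s-1$ for \emph{every} $L\in\cL_n$. So the indicator $f=1_\cF$ has constant sum on every one-factorization. My plan is to first prove the linear statement announced in the abstract: a function $f:S_n\to\R$ has constant sum over every one-factorization if and only if $f\in\Span\{1_{S_n(i\mapsto j)}\}$. Then I would deduce the classification from Boolean structure.

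For the linear step, I would work combinatorially with local switches between one-factorizations. Take two Latin squares that differ only on a small substructure. For example, take two rows $\sigma,\tau$ whose union contains an alternating cycle, and swap along that cycle to get $\sigma',\tau'$. This gives $f(\sigma)+f(\tau)=f(\sigma')+f(\tau')$ whenever $\{\sigma,\tau\}$ and $\{\sigma',\tau'\}$ can be completed to one-factorizations by a common remainder. I would show these "switching relations" generate exactly the orthogonal complement of the span of the $1$-coset indicators. Concretely, I would show that any $f$ satisfying them is determined by its values on a spanning set and has vanishing projection onto the non-linear isotypic components. The combinatorial route is to reduce to relations of the form $f(\sigma)+f(\sigma\pi)=f(\sigma\pi')+f(\sigma\pi\pi')$ for transpositions $\pi,\pi'$ acting on disjoint pairs. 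Functions satisfying these $4$-term relations are exactly the "degree $\le1$" functions, which I would prove by induction on $n$ via restriction to $S_n(i\mapsto j)\cong S_{n-1}$.

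Two ingredients are needed. The first is a completion lemma: for $n$ large, any two disjoint permutations, and the switched pair, extend to a full one-factorization sharing the other $n-2$ rows. This follows from Hall-type completions of Latin rectangles, with care needed because the remainder must avoid both pairs. The second is the small orders $n\le 4$ or so, which would be checked directly. I expect this linear step, especially the completion lemma and the small cases, to be the main obstacle.

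Given $1_\cF=\sum_{i,j}c_{ij}1_{S_n(i\mapsto j)}$ with $\cF$ Boolean, I would show $\cF$ is a disjoint union of $1$-cosets. The key property of a degree-$1$ Boolean function on $S_n$ is that it is a union of pairwise disjoint $1$-cosets, all of the form $S_n(i\mapsto j)$ with $i$ fixed or all with $j$ fixed. I would prove this as follows. Pick $\sigma\in\cF$ and $\tau\notin\cF$ differing by a transposition $(a\,b)$. The difference $f(\sigma)-f(\tau)$ is $\sum$ of four $c$'s, so it lies in a small set, and comparing over many such pairs forces the matrix $(c_{ij})$, after normalization, to be supported on a single row or a single column with $0/1$ entries. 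The size then gives $s-1$ cosets, which is (i) or (ii). Conversely, families (i) and (ii) have $\nu\le s-1$ by pigeonhole on the value $\sigma(i)$ (respectively $\sigma^{-1}(j)$), completing the characterization.
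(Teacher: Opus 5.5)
Your architecture matches the paper's: averaging bound, saturation $|\cF\cap L|=s-1$, the identity $T_n^\perp=U_n$, the Boolean degree-one classification, and pigeonhole for the converse. Your two-row switch also contains the paper's Lemma~\ref{lem:four-term} as a special case. Two corrections first.
\begin{itemize}
\item The completion is not an obstacle for any $n$. The pairs $\{\sigma,\tau\}$ and $\{\sigma',\tau'\}$ cover the same $2n$ edges, so one perfect-matching decomposition of the $(n-2)$-regular bipartite complement completes both.
\item Your target relation is misstated. It must read $f(\sigma)+f(\sigma\pi\pi')=f(\sigma\pi)+f(\sigma\pi')$. As written, $x_{1,1}$ with $\sigma=e$, $\pi=(3\ 4)$, $\pi'=(1\ 2)$ violates it.
\end{itemize}

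The real issue is the reduction from switches to disjoint-transposition relations, which you leave unstated. Write $\tau=\sigma\rho$ with $\rho$ a derangement. A switch along a union of cycles of $\rho$ gives $f(\sigma)+f(\sigma\rho_1\rho_2)=f(\sigma\rho_1)+f(\sigma\rho_2)$, where $\rho_1,\rho_2$ are derangements of complementary sets. Taking $\rho_1=(a\ b)$, this says $\Delta_{ab}f(g)=\Delta_{ab}f(g\beta)$ for every derangement $\beta$ of $B=[n]\setminus\{a,b\}$. To obtain transpositions on $B$ you need these derangements to generate $S_B$. That holds for $|B|=2$ and $|B|\ge4$, i.e.\ for $n=4$ and $n\ge6$.

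At $n=5$ this route cannot work at all. Every derangement of $[5]$ with more than one cycle has type $(2,3)$. So every switch relation has the form $f(\sigma)+f(\sigma t\gamma)=f(\sigma t)+f(\sigma\gamma)$, with $t$ a transposition and $\gamma$ a disjoint $3$-cycle, and $\operatorname{sgn}$ satisfies all of these (both sides vanish). Yet $\operatorname{sgn}$ is not constant on one-factorizations: $L_1$ and $L_3$ in Lemma~\ref{lem:order-five-relation} have sign sums $-3$ and $-5$. Hence two-row switches do not span $T_5$. You need an extra relation coming from a trade on three or more rows, such as the explicit identity of Lemma~\ref{lem:order-five-relation}. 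Your ``small orders $n\le4$ or so'' does not identify this exceptional case. (For $n=3$ there are no nontrivial switches at all, so your planned direct check there is genuinely needed.)

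The Boolean step is also only gestured at. Knowing that $c_{a,x}+c_{b,y}-c_{a,y}-c_{b,x}\in\{-1,0,1\}$ does not by itself show why $(c_{ij})$ concentrates on one row or one column. This is the Ellis--Friedgut--Pilpel classification (Theorem~\ref{thm:boolean-linear}), which the paper cites rather than reproves. You should cite it or give a real proof.

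Finally, your induction on $n$ for ``four-term relations imply degree one'' is a legitimate alternative to the paper's cocycle Lemma~\ref{lem:difference}. It must start at $n=4$, since the relations are vacuous for $n=3$, and the gluing of the coset restrictions still has to be carried out.
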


We next describe the function-space statement behind
Theorem~\ref{thm:extremal}.  A \emph{one-factorization} of $K_{n,n}$ is a
set $L\subseteq S_n$ of $n$ pairwise disjoint permutations; write $\cL_n$
for the set of all such $L$.  A \emph{Latin square of order $n$} is an
$n\times n$ array with entries from a set of $n$ symbols such that every
symbol occurs exactly once in each row and exactly once in each column.
After labelling the factors of a one-factorization by these symbols, place in
cell $(i,j)$ the label of the unique factor containing the edge $(i,j)$.
This produces a Latin square of order $n$; conversely, the symbol classes of
a Latin square form a one-factorization.  Thus one-factorizations provide the
natural linear encoding of Latin squares used below.  For
$i,j\in[n]$, define the point-coordinate function
\[
   x_{i,j}(\sigma)=\mathbf1_{\{\sigma(i)=j\}},
\]
and let
\begin{equation}
   U_n=\Span_{\R}\{x_{i,j}:i,j\in[n]\}.
   \label{eq:Un-intro}
\end{equation}
Since $x_{i,j}$ is the indicator of $S_n(i\mapsto j)$, the space $U_n$ is
precisely the real linear span of the indicators of all $1$-cosets.
For every fixed $i$,
$
   \sum_{j=1}^n x_{i,j}=1,
$
so $U_n$ contains the constant functions.

Equip $\R^{S_n}$ with the standard inner product
\[
   \langle f,g\rangle
      =\sum_{\sigma\in S_n}f(\sigma)g(\sigma).
\]
For $L\in\cL_n$, let $v_L=\mathbf1_L$ be its incidence function, and define
\begin{equation}
   T_n
      =\Span_{\R}\{v_L-v_{L'}:L,L'\in\cL_n\}.
   \label{eq:Tn}
\end{equation}
Thus $T_n^\perp$ is precisely the space of functions whose sum is the same
on every one-factorization.  Moreover, every one-factorization contains
exactly one permutation mapping $i$ to $j$.  Consequently,
$U_n\subseteq T_n^\perp$.

Boolean-valued functions in $U_n$ are already completely understood:
they are precisely indicators of unions of $1$-cosets in one row or in
one column~\cite{EFP,FilmusNearlyLinear}.  Thus the equality problem leads to the natural linear statement that a function has constant sum on every one-factorization if and only if it belongs to $U_n$. Our second main theorem establishes this statement for every order. 

\begin{theorem}\label{thm:constant-sum}
For every $n\geq2$,
\[
   U_n=T_n^\perp.
\]
In other words, a function $f:S_n\to\R$ belongs to $U_n$ if and only if
$\sum_{\sigma\in L}f(\sigma)$ is independent of the one-factorization $L$.
Equivalently, this holds if and only if there are $c\in\R$ and
$a_{i,j}\in\R$ such that
\[
   f(\sigma)=c+\sum_{i=1}^n a_{i,\sigma(i)}
   \qquad(\sigma\in S_n).
\]
\end{theorem}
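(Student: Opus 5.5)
We must prove $U_n=T_n^\perp$. Since $U_n\subseteq T_n^\perp$ was already noted, it suffices to show $T_n^\perp\subseteq U_n$. Equivalently, $\dim T_n\geq n!-\dim U_n$. Recall that $\dim U_n=(n-1)^2+1$, because the $x_{i,j}$ satisfy exactly the $2n-1$ independent row and column relations. We can also argue directly. Take $f\in T_n^\perp$ and subtract the element $g\in U_n$ that agrees with $f$ on a suitable spanning set of permutations. This normalises $f$ to vanish on, say, all permutations with "small support" relative to a fixed basis. The goal is then to show that such a normalised $f$ is identically zero.

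The main tool is local switching of Latin squares. Suppose two one-factorizations $L,L'$ differ only by a local move, such as swapping the symbols inside an intercalate (a $2\times 2$ Latin subsquare) or, more generally, along a row or cycle switch. Then $v_L-v_{L'}$ is supported on a few permutations, and $f\in T_n^\perp$ gives a short linear relation. Concretely, an intercalate swap on rows $r,r'$ and columns $c,c'$ replaces two factors $\sigma,\tau$ by $\sigma',\tau'$, where $\sigma'$ agrees with $\sigma$ except on $\{r,r'\}$ (swapped with $\tau$). This yields
\[
f(\sigma)+f(\tau)=f(\sigma')+f(\tau').
\]
Here $\sigma,\tau$ are disjoint, and $\sigma(r)=\tau(r')$, $\sigma(r')=\tau(r)$. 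I would first show that any such pair $(\sigma,\tau)$ embeds in a one-factorization containing that intercalate. For $n\ge 4$ this follows by completing a partial Latin structure: extend $\{\sigma,\tau\}$, whose union is a 2-regular bipartite graph, to a one-factorization by Kőnig's theorem applied to the $(n-2)$-regular complement. This gives, for every disjoint pair with the transposition property, the relation
\[
f(\sigma)-f(\sigma\circ(r\,r'))=f(\tau\circ(r\,r'))-f(\tau).
\]

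Next I would interpret these relations as saying that the "discrete derivative" $\Delta_{r,r'}f(\sigma):=f(\sigma)-f(\sigma(r\,r'))$ is the same for $\sigma$ and for many other permutations $\tau$. Chaining through intermediate permutations disjoint from both, which is possible when $n$ is large enough, should show that $\Delta_{r,r'}f(\sigma)$ depends only on the values $(\sigma(r),\sigma(r'))$. A function whose transposition derivatives depend only on the two entries involved is exactly a function of the form $c+\sum_i a_{i,\sigma(i)}$. This is the standard characterisation of the degree-$1$ space: integrate along a path of transpositions from the identity, and check consistency, i.e. path-independence. Path-independence can be verified on the relations of the Coxeter presentation (commuting transpositions, braid relations). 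Alternatively, I could compare the resulting function with an explicit $g\in U_n$ that has the same derivatives.

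The main obstacle is the chaining step: proving that $\Delta_{r,r'}f(\sigma)$ depends only on $(\sigma(r),\sigma(r'))$. This needs the graph on permutations with $\sigma(r)=a,\ \sigma(r')=b$ to be connected, where two permutations are joined when a common disjoint $\tau$ with $\tau(r)=b,\tau(r')=a$ exists. For large $n$, any two such $\sigma_1,\sigma_2$ admit a common $\tau$ avoiding both on the other rows, by Hall's theorem, since each row forbids at most $2$ columns. For small $n$, the counting fails, the completion via intercalates may be unavailable, and $n=2,3$ are degenerate. So I would handle $n\le 5$ or so separately, by a direct rank computation of $T_n$ (for $n=2,3$ by hand, and for $n=4,5$ via explicit Latin squares and their cycle switches).
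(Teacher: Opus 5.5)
Your first half is the paper's argument in different notation. Your intercalate relation is exactly the paper's four-term relation $\delta_g+\delta_{g\alpha\beta}-\delta_{g\alpha}-\delta_{g\beta}\in T_n$, with $g=\sigma$, $\alpha=(r\ r')$, $\tau=\sigma\alpha\beta$, and $\beta$ a derangement of the remaining positions. The completion to one-factorizations via Hall on the $(n-2)$-regular complement is also the same. Your Hall-based connectivity is a valid replacement, for $n\ge6$, for the paper's fact that derangements of $S_B$ generate $S_B$: any two permutations agreeing at $r,r'$ then have a common $\tau$, since each row forbids at most two columns. For $n=4$ you need no common $\tau$, because one application of $\Delta_{rr'}f(\sigma)=\Delta_{rr'}f(\tau(r\,r'))$ already links the two permutations with prescribed values at $r,r'$. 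For $n=5$ the paper avoids a rank computation by exhibiting four one-factorizations giving $\delta_e+\delta_{\alpha\rho}-\delta_\alpha-\delta_\rho\in T_5$ for disjoint transpositions $\alpha,\rho$.

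The genuine gap is the step you call standard: that local transposition differences force $f(\sigma)=c+\sum_i a_{i,\sigma(i)}$. Your suggested justification does not prove it.
\begin{itemize}
\item For a given $f$, integrating its own differences is automatically path-independent, so there is nothing to check there. The real content is that each difference function is a coboundary, $D(x,y)=u(x)-u(y)$, compatibly across positions, so that the $a_{i,j}$ exist.
\item Coxeter relations for adjacent transpositions cannot supply this. The inversion number satisfies $\operatorname{inv}(\sigma)-\operatorname{inv}(\sigma s_i)=\pm1$ according as $\sigma(i)\gtrless\sigma(i+1)$, so its adjacent differences are local. Yet $d(x,y)+d(y,z)+d(z,x)=-1$ for $x<y<z$, so $\operatorname{inv}\notin U_n$.
\item The claim is also not formal in $n$. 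At $n=3$ every function on $S_3$ has local transposition differences, since $\sigma$ is determined by $\sigma(a),\sigma(b)$, but $\dim U_3=5<6$.
\end{itemize}

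What is missing is the paper's use of a non-adjacent transposition together with a fourth value. Set $d_i=D_{i,i+1}$, pick a third position $q$, and let $t=(i{+}1\ q)$. Telescoping along $\sigma,\sigma s_i,\sigma s_it,\sigma s_its_i=\sigma(i\ q)$ gives
\[
d_i(x,y)+d_i(y,z)=D_{i,q}(x,z)-D_{i+1,q}(x,z),
\]
which is independent of $y$. An antisymmetric function on a set of at least four values with this property is a coboundary, so $d_i(x,y)=u_i(x)-u_i(y)$. Now put $a_{1,x}=0$ and $a_{i+1,x}=a_{i,x}-u_i(x)$. Then $F(\sigma)=\sum_k a_{k,\sigma(k)}$ has the same adjacent-transposition differences as $f$, so $f-F$ is constant. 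With this inserted, and with your $n=4,5$ computations actually carried out (or replaced as above), your plan becomes a complete proof.
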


\begin{corollary}\label{cor:factorization-span-dimensions}
For every $n\geq2$, the incidence functions of the one-factorizations satisfy
\begin{align*}
   \dim\Span_{\R}\{\mathbf1_L:L\in\cL_n\}
      &=n!-(n-1)^2,\\
   \dim\Span_{\R}\{\mathbf1_L-\mathbf1_{L'}:L,L'\in\cL_n\}
      &=n!-(n-1)^2-1.
\end{align*}
\end{corollary}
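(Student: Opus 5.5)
The corollary should follow from Theorem~\ref{thm:constant-sum} by orthogonal complements, once we know $\dim U_n$. Put $V=\Span_{\R}\{\mathbf 1_L:L\in\cL_n\}$. By \eqref{eq:Tn} and Theorem~\ref{thm:constant-sum}, $\dim T_n=n!-\dim U_n$. So the first step is to compute $\dim U_n=(n-1)^2+1$, and the second is to relate $V$ to $T_n$.

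\textbf{Computing $\dim U_n$.} The $n^2$ functions $x_{i,j}$ satisfy the $2n$ relations
\[
\sum_j x_{i,j}=1 \quad\text{for each } i, \qquad \sum_i x_{i,j}=1 \quad\text{for each } j.
\]
Their differences span a space of dimension at most $2n-2$, since the sum of all row relations equals the sum of all column relations. This gives $\dim U_n\le n^2-(2n-2)=(n-1)^2+1$.

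For the reverse inequality I would use the linear map $\Phi:\R^{n\times n}\to\R^{S_n}$ given by $a\mapsto\bigl(\sigma\mapsto\sum_i a_{i,\sigma(i)}\bigr)$, whose image is $U_n$. The task is to show its kernel consists exactly of the matrices $a_{i,j}=r_i+c_j$ with $\sum_i r_i+\sum_j c_j=0$, a space of dimension $2n-1-1=2n-2$. Suppose $\sum_i a_{i,\sigma(i)}=0$ for every $\sigma$. Comparing $\sigma$ with $\sigma$ composed with a transposition gives
\[
a_{i,j}+a_{k,l}=a_{i,l}+a_{k,j}\qquad\text{for all } i\ne k,\ j\ne l.
\]
This identity forces the form $a_{i,j}=r_i+c_j$, and evaluating on any $\sigma$ then gives the constraint $\sum_i r_i+\sum_j c_j=0$. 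Hence $\dim U_n=n^2-(2n-2)=(n-1)^2+1$, and so $\dim T_n=n!-(n-1)^2-1$, which is the second claimed formula.

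\textbf{Relating $V$ to $T_n$.} Clearly $T_n\subseteq V$. The constant function $\mathbf 1$ lies in $U_n=T_n^\perp$, and $\langle\mathbf 1_L,\mathbf 1\rangle=n\neq0$ for every $L$. So $\mathbf 1_L\notin T_n$, while $V=T_n+\R\,\mathbf 1_L$ for any fixed $L$. This requires $\cL_n\neq\varnothing$, which holds because the cyclic Latin square exists for every $n\ge 2$. Therefore $\dim V=\dim T_n+1=n!-(n-1)^2$.

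The main point to get right is the kernel computation. The step from the transposition identity to $a_{i,j}=r_i+c_j$ needs $n\ge2$. It is easiest to normalize first: for $n\ge2$ we can subtract row and column constants so that the first row and first column of $a$ vanish. The identity with $k=1$, $l=1$ then gives $a_{i,j}=0$ whenever $i,j\ne1$, so the normalized matrix is zero. Once that is done, everything else is immediate from Theorem~\ref{thm:constant-sum}.
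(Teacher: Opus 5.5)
Your proposal is correct and follows the paper's proof essentially step for step. It uses the same kernel computation for $a\mapsto(\sigma\mapsto\sum_i a_{i,\sigma(i)})$ to get $\dim U_n=(n-1)^2+1$, applies Theorem~\ref{thm:constant-sum} for the difference space, and uses the coordinate-sum argument (equivalently, pairing with $\mathbf 1\in U_n$) to add one dimension for $\mathbf 1_{L_0}$. The only blemish is your preliminary upper-bound paragraph: the claim that the relations ``span a space of dimension at most $2n-2$'' points the wrong way for bounding $\dim U_n$ from above, since you need at least (in fact exactly) $2n-2$. That paragraph is redundant anyway, because the kernel computation with rank--nullity already gives the exact value.
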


\begin{remark}
There is an equivalent rational group-algebra formulation.  Let $P_\sigma$
be the permutation matrix of $\sigma$, and define
\[
   \Phi_{n,\mathbb Q}:\mathbb Q[S_n]\longrightarrow M_n(\mathbb Q),
   \qquad
   \Phi_{n,\mathbb Q}\left(\sum_{\sigma\in S_n}c_\sigma\sigma\right)
      =\sum_{\sigma\in S_n}c_\sigma P_\sigma.
\]
Since $\Phi_{n,\mathbb Q}(\sum_{\sigma\in L}\sigma)=J$ for every
$L\in\cL_n$, where
$J$ is the all-ones matrix, one may ask whether
\begin{equation}
   \Span_{\mathbb Q}
      \left\{\sum_{\sigma\in L}\sigma:L\in\cL_n\right\}
      =\Phi_{n,\mathbb Q}^{-1}(\mathbb QJ).
   \label{eq:latin-square-span-intro}
\end{equation}
Cooper and Dukes~\cite{CooperDukes} recently proved
\eqref{eq:latin-square-span-intro} for all sufficiently large $n$ and
verified it computationally for $n\leq11$; equivalently, the Latin-square
vectors span a space of dimension $n!-(n-1)^2$.  This is precisely the
large-$n$ case of the linear statement above.  Indeed, after
fixing $L_0\in\cL_n$, the left-hand side of
\eqref{eq:latin-square-span-intro} is
\[
   \mathbb Q\sum_{\sigma\in L_0}\sigma
      +\Span_{\mathbb Q}
         \left\{\sum_{\sigma\in L}\sigma
                    -\sum_{\sigma\in L_0}\sigma:L\in\cL_n\right\},
\]
whereas the right-hand side is
$\mathbb Q\sum_{\sigma\in L_0}\sigma+\ker\Phi_{n,\mathbb Q}$.  Thus
\eqref{eq:latin-square-span-intro} is equivalent to saying that the
differences of one-factorizations span $\ker\Phi_{n,\mathbb Q}$.  Extending
scalars from $\mathbb Q$ to $\mathbb R$ and taking orthogonal complements
gives Theorem~\ref{thm:constant-sum}.  Conversely, both spaces are defined
over $\mathbb Q$, so the real identity proved in this paper implies the
rational identity above.  Hence our theorem establishes
\eqref{eq:latin-square-span-intro} for every $n\geq2$.
\end{remark}

Here is the connection between the two theorems.  Equality in
\eqref{eq:classical-bound} forces
$
   |\cF\cap L|=s-1
$
for every one-factorization $L$.  Theorem~\ref{thm:constant-sum} therefore
places $\mathbf1_{\cF}$ in $U_n$, and the Boolean classification then gives
the two families in Theorem~\ref{thm:extremal}.

In contrast to the representation-theoretic approach above, our proof of
Theorem~\ref{thm:constant-sum} is combinatorial and uses only elementary
linear algebra and basic facts about permutations.  Its advantage is that it
applies to every $n$ and exposes a local-to-global mechanism behind
$U_n$.  In the notation above, the constant-sum condition is precisely
orthogonality to $T_n$, so it is enough to prove
$T_n^\perp\subseteq U_n$.  We first complete two pairs of disjoint
permutations that cover the same edge set to two one-factorizations.  The
resulting four-term relations imply that, for $f\in T_n^\perp$, the difference
$
   f(\sigma)-f(\sigma(a\ b))
$
depends only on the positions $a,b$ and the two values $\sigma(a),\sigma(b)$.
We say that $f$ has local transposition differences.  We then compare a direct
transposition with a three-step realization of the same transposition.  The
resulting cocycle identity shows that every
transposition difference is a difference of one-variable potentials.  Since
adjacent transpositions generate $S_n$, these potentials give the additive
representation in Theorem~\ref{thm:constant-sum}.

The paper is organized as follows.  Section~2 gives the
one-factorization averaging argument, recalls $U_n$ and $T_n$, and
reduces both main theorems to the inclusion $T_n^\perp\subseteq U_n$.
Section~3 proves this inclusion in two steps: first, functions in
$T_n^\perp$ have local transposition differences; second, every function
with local transposition differences belongs to $U_n$.  Section~4 uses these
conclusions to prove Theorems~\ref{thm:constant-sum} and~\ref{thm:extremal},
and Corollary~\ref{cor:factorization-span-dimensions}.
Section~5 discusses the general Hilton--Milner problem for matchings in
permutations.

\section{The reduction to a function-space problem}

\subsection{One-factorizations and equality}

Recall that $\cL_n$ is the set of all one-factorizations of $K_{n,n}$,
represented as sets of permutations.  This set is nonempty: if $\gamma$ is an
$n$-cycle, then
\begin{equation}
   L_\gamma=\{e,\gamma,\ldots,\gamma^{n-1}\}
   \label{eq:cyclic-factorization}
\end{equation}
is a one-factorization.  We compose permutations from right to left.  For
$g,h\in S_n$ and $L\in\cL_n$, let
\[
   gLh=\{g\sigma h:\sigma\in L\}.
\]
For each position $i$, the values $g\sigma h(i)$, as $\sigma$ ranges over
$L$, are all distinct.  Hence $gLh\in\cL_n$.  In particular, left and right
composition preserve $\cL_n$, and left composition acts transitively on
$S_n$.

\begin{lemma}\label{lem:average}
For every function $f:S_n\to\R$,
\begin{equation}
   \frac1{|\cL_n|}\sum_{L\in\cL_n}\sum_{\sigma\in L}f(\sigma)
      =\frac1{(n-1)!}\sum_{\sigma\in S_n}f(\sigma).
   \label{eq:average}
\end{equation}
\end{lemma}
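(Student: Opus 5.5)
The plan is a double-counting argument that uses the left action of $S_n$ on $\cL_n$ recorded just before the lemma. Exchanging the order of summation, the left-hand side of \eqref{eq:average} equals
\[
   \frac1{|\cL_n|}\sum_{\sigma\in S_n}f(\sigma)\,N(\sigma),
   \qquad
   N(\sigma)=|\{L\in\cL_n:\sigma\in L\}|.
\]
It therefore suffices to show that $N(\sigma)$ does not depend on $\sigma$, and then to determine its common value.

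\emph{Constancy of $N$.} For $g\in S_n$, the map $L\mapsto gL$ sends $\cL_n$ to $\cL_n$, as noted above. It is a bijection, with inverse $L\mapsto g^{-1}L$. Moreover $\sigma\in L$ if and only if $g\sigma\in gL$. Hence this map gives a bijection between the one-factorizations containing $\sigma$ and those containing $g\sigma$, so $N(\sigma)=N(g\sigma)$. Since left composition acts transitively on $S_n$, we get $N(\sigma)=N(e)=:N$ for every $\sigma$.

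\emph{The value of $N$.} Apply the same double count to the constant function $f\equiv1$. Every $L\in\cL_n$ has exactly $n$ elements, so
\[
   n|\cL_n|=\sum_{L\in\cL_n}|L|=\sum_{\sigma\in S_n}N(\sigma)=n!\,N.
\]
Thus $N/|\cL_n|=n/n!=1/(n-1)!$. Substituting this into the first display gives \eqref{eq:average}.

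The only point needing care is that $\cL_n$ is nonempty, so the division by $|\cL_n|$ makes sense. This is guaranteed by the cyclic factorization \eqref{eq:cyclic-factorization}. Otherwise the argument is routine, and I do not expect any real obstacle.
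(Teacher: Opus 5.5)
Your proposal is correct and follows the paper's proof: both show that the number of one-factorizations containing a given permutation is independent of the permutation, because left composition preserves $\cL_n$ and acts transitively on $S_n$. Both then count the pairs $(\sigma,L)$ with $\sigma\in L$ to get $n|\cL_n|=n!\,N$. You add two details the paper leaves implicit: that $L\mapsto gL$ is a bijection, and that $\cL_n\neq\varnothing$.
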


\begin{proof}
Let $r_n$ be the number of one-factorizations containing a fixed
permutation.  The number is independent of the permutation because left
composition preserves $\cL_n$ and acts transitively on $S_n$.  Counting
pairs $(\sigma,L)$ with $\sigma\in L$ gives
$
   n|\cL_n|=n!r_n.
$
Therefore
\[
   \sum_{L\in\cL_n}\sum_{\sigma\in L}f(\sigma)
      =r_n\sum_{\sigma\in S_n}f(\sigma)
      =\frac{|\cL_n|}{(n-1)!}\sum_{\sigma\in S_n}f(\sigma),
\]
which is \eqref{eq:average}.
\end{proof}

\begin{proposition}\label{prop:equality-reduction}
Let $n\geq2$, $s\geq2$, and $\cF\subseteq S_n$ satisfy $\nu(\cF)<s$.
Then
\[
   |\cF|\leq(s-1)(n-1)!.
\]
If equality holds, then $s\leq n+1$ and
\begin{equation}
   |\cF\cap L|=s-1
   \qquad\text{for every }L\in\cL_n.
   \label{eq:saturation}
\end{equation}
\end{proposition}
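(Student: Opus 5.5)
The plan is to apply Lemma~\ref{lem:average} to the indicator $f=\mathbf 1_{\cF}$. Every one-factorization $L\in\cL_n$ consists of $n$ pairwise disjoint permutations, so $\cF\cap L$ is a matching in $\cF$. Hence $\nu(\cF)<s$ gives
\[
   |\cF\cap L|\le \min\{s-1,n\}
   \qquad\text{for every }L\in\cL_n .
\]
Summing over $L\in\cL_n$ and using \eqref{eq:average} with $f=\mathbf 1_{\cF}$, I would obtain
\[
   \frac{|\cF|}{(n-1)!}
      =\frac1{|\cL_n|}\sum_{L\in\cL_n}|\cF\cap L|
      \le \min\{s-1,n\}
      \le s-1 .
\]
This gives $|\cF|\le (s-1)(n-1)!$. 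Here $\cL_n\neq\varnothing$ by \eqref{eq:cyclic-factorization}, so the average is well defined.

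For the equality case, suppose $|\cF|=(s-1)(n-1)!$. Then both inequalities in the chain above are tight. The second one forces $\min\{s-1,n\}=s-1$, that is, $s-1\le n$, which is $s\le n+1$. Alternatively, one can argue directly from $|\cF|\le |S_n|=n(n-1)!$, which gives $s-1\le n$.

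For the first inequality, the average of the numbers $|\cF\cap L|$ over $L\in\cL_n$ equals $s-1$, while each term is at most $s-1$. An average equal to a common upper bound forces every term to equal that bound, so $|\cF\cap L|=s-1$ for every $L\in\cL_n$. This is \eqref{eq:saturation}.

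There is no real obstacle in this argument. The only points needing care are that a one-factorization has exactly $n$ members, which bounds $|\cF\cap L|$ by $n$ as well as by $s-1$, and that $\cL_n$ is nonempty. The substantive work in the paper lies in exploiting \eqref{eq:saturation} afterwards through Theorem~\ref{thm:constant-sum}.
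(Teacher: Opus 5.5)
Your proposal is correct and follows essentially the same argument as the paper. You apply Lemma~\ref{lem:average} to $\mathbf 1_{\cF}$ with the termwise bound $|\cF\cap L|\le\min\{s-1,n\}$, and you read off \eqref{eq:saturation} and $s\le n+1$ from tightness of that chain; your remark that $\cL_n\neq\varnothing$ is a small point the paper leaves implicit.
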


\begin{proof}
Every $L\in\cL_n$ is a matching of size $n$, so
\[
   |\cF\cap L|\leq\min\{s-1,n\}\leq s-1.
\]
Applying Lemma~\ref{lem:average} to $\mathbf1_{\cF}$ gives
\[
   \frac{|\cF|}{(n-1)!}
      =\frac1{|\cL_n|}\sum_{L\in\cL_n}|\cF\cap L|
      \leq s-1.
\]
If equality holds, every term in the average equals $s-1$, proving
\eqref{eq:saturation}.  Since $|\cF\cap L|\leq n$, equality also gives
$s-1\leq n$.
\end{proof}

\subsection{The spaces \texorpdfstring{$U_n$}{U-n} and
\texorpdfstring{$T_n$}{T-n}}

Recall the spaces $U_n$ and $T_n$ from
\eqref{eq:Un-intro} and~\eqref{eq:Tn}, respectively.
For $\sigma\in S_n$, let $\delta_\sigma$ be the function that is $1$ at
$\sigma$ and $0$ elsewhere.  Thus the functions $\delta_\sigma$ form an
orthonormal basis of $\R^{S_n}$.

We shall use the following known classification of the Boolean-valued members
of $U_n$.  It is the
degree-one case proved by Ellis, Friedgut and Pilpel
\cite[Theorem~28 and Corollary~2]{EFP}; see also
Filmus~\cite[Theorem~2.8]{FilmusNearlyLinear}.

\begin{theorem}[Ellis--Friedgut--Pilpel~\cite{EFP}]\label{thm:boolean-linear}
Let $f:S_n\to\{0,1\}$ belong to $U_n$.  Then one of the following holds:
\begin{enumerate}[label=\textup{(\roman*)}]
\item there are $i\in[n]$ and $J\subseteq[n]$ such that
$
   f(\sigma)=\mathbf1_{\{\sigma(i)\in J\}};
$
\item there are $j\in[n]$ and $I\subseteq[n]$ such that
$
   f(\sigma)=\mathbf1_{\{\sigma^{-1}(j)\in I\}}.
$
\end{enumerate}
\end{theorem}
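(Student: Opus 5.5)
We prove Theorem~\ref{thm:boolean-linear} by combining an additive normal form for $f$ with a rigidity argument on its coefficient matrix. Write $f\in U_n$ as $f(\sigma)=\sum_{i}a_{i,\sigma(i)}$, absorbing any constant into the first row. The matrix $A=(a_{i,j})$ is determined only up to the changes $a_{i,j}\mapsto a_{i,j}+r_i+c_j$ with $\sum_i r_i+\sum_j c_j=0$. The basic local tool is the effect of a transposition. For $\sigma$ and positions $p\neq q$,
\[
   f(\sigma)-f(\sigma(p\ q))
      =a_{p,\sigma(p)}+a_{q,\sigma(q)}-a_{p,\sigma(q)}-a_{q,\sigma(p)}.
\]
This quantity is the $2\times2$ ``cross difference'' $D(p,q;j,k)$ of $A$, where $j=\sigma(p)$ and $k=\sigma(q)$. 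Since $f$ is $\{0,1\}$-valued, every such difference lies in $\{-1,0,1\}$. When $n\geq4$, for any rows $p,q$ and columns $j,k$ we can choose $\sigma$ with $\sigma(p)=j$, $\sigma(q)=k$ and the remaining values free. Varying the rest of $\sigma$ then forces strong joint constraints among the cross differences.

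The plan is to show that, after normalization, $A$ is supported on a single row or a single column. First normalize so that the first row and first column of $A$ are zero. Then $a_{i,j}=-D(1,i;1,j)\in\{-1,0,1\}$ for $i,j\geq2$, so all entries are integers in $\{-1,0,1\}$ and $f$ becomes integer-valued. Next study the pattern of nonzero cross differences. Call $(p,q)$ an ``active'' pair of rows if some $D(p,q;j,k)\neq0$, and similarly for columns. The key claim is that all nonzero cross differences share a common row or a common column. For instance, suppose $D(p,q;j,k)\neq0$ and $D(p',q';j',k')\neq0$ with $\{p,q\}\cap\{p',q'\}=\varnothing$ and $\{j,k\}\cap\{j',k'\}=\varnothing$. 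Then take $\sigma$ realizing both pairs simultaneously and compare the values of $f$ at $\sigma$, $\sigma(p\ q)$, $\sigma(p'\ q')$ and $\sigma(p\ q)(p'\ q')$. These four values must all lie in $\{0,1\}$, yet the two differences add independently. Choosing signs via the symmetry $D(p,q;j,k)=-D(p,q;k,j)$ produces a range of size $3$, which is a contradiction. Overlapping configurations, where rows are shared but columns are distinct and vice versa, are handled by similar $3$-cycle comparisons.

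Once all nonzero cross differences involve a fixed row $i$, we can re-normalize $A$ so that only row $i$ is nonzero. Then $f(\sigma)=g(\sigma(i))$ for some function $g$, and Booleanity forces $g=\mathbf1_J$, which gives case (i). The symmetric situation, where a fixed column carries all nonzero differences, gives case (ii). Small $n$ ($n\leq3$) are checked directly, since there $U_n$ is small: for $n=2$ everything is trivially of form (i), and for $n=3$ one checks the $2^6$ Boolean functions or uses $\dim U_3=5$.

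The main obstacle is the combinatorial case analysis establishing that the nonzero cross differences share a row or a column. The disjoint case above is clean. The mixed cases, such as two differences sharing one row but with disjoint columns, require carefully choosing additional transpositions and checking that $\{0,1\}$-valuedness fails. I would first try to reduce everything to the disjoint case using the cocycle identity
\[
   D(p,q;j,k)+D(q,r;j,k)=D(p,r;j,k),
\]
which propagates nonzero differences. Where this is insufficient, I would fall back on citing \cite{EFP} directly, since this theorem is quoted there.
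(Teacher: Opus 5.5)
The paper does not prove this statement. It quotes it from Ellis--Friedgut--Pilpel (Theorem~28 and Corollary~2 there; see also Filmus, Theorem~2.8), so your fallback of citing it is exactly the paper's route. Your self-contained argument, however, has a genuine gap at its central step.

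The doubly-disjoint comparison is correct for $n\geq4$. But it only shows that any two nonzero rectangles $\{p,q\}\times\{j,k\}$ share a row or a column. Pairwise intersection does not give a common row or column: consider nonzero rectangles on the row pairs $\{1,2\},\{2,3\},\{1,3\}$. Your plan for the remaining ``mixed'' cases is also misdirected. Two nonzero rectangles sharing a row but with disjoint columns occur in the genuine examples. For $\mathbf 1_{\{\sigma(1)\in J\}}$, take $\{1,2\}\times\{j,k\}$ and $\{1,3\}\times\{j',k'\}$ with $j,j'\in J$ and $k,k'\notin J$. So no comparison of $f$-values built from such a pair can yield a contradiction. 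What is missing is a global argument, and you do not supply one.

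Your disjoint lemma is in fact enough once such an argument is added. Here is one way to do it.
\begin{itemize}
\item Put $v_{pq}=a_{p,\cdot}-a_{q,\cdot}$. Then $D(p,q;j,k)=v_{pq}(j)-v_{pq}(k)$ and $v_{pr}=v_{pq}+v_{qr}$, so ``$v_{pq}$ is constant'' defines an equivalence relation $p\sim q$ on rows.
\item Let $\{p,q\}$ and $\{p',q'\}$ be disjoint with $p\not\sim q$ and $p'\not\sim q'$. Your lemma says every column pair separated by $v_{pq}$ meets every column pair separated by $v_{p'q'}$. A short check for $n\geq4$ then shows both vectors have the form $j\mapsto\gamma+\lambda\mathbf 1_{\{j=j_0\}}$ with $\lambda\neq0$ and the same $j_0$.
\item If there is a single class, or the classes are $\{i\}$ and $[n]\setminus\{i\}$, your re-normalization gives case (i).
\item Otherwise, every inequivalent pair has a disjoint inequivalent pair. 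The only exception is the pair $\{q,r\}$ when the classes are $\{q\}$, $\{r\}$ and $[n]\setminus\{q,r\}$. Additivity of $v$ then forces one common $j_0$ for all inequivalent pairs.
\item Hence $f(\sigma)=c+\lambda_{\sigma^{-1}(j_0)}$, which is case (ii).
\end{itemize}

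Two small slips are worth fixing. First, shifts with $\sum_i r_i+\sum_j c_j=0$ cannot in general make both the first row and the first column vanish (try $f\equiv1$), so you must keep a free constant. Second, after that normalization $a_{i,j}=D(1,i;1,j)$, not $-D(1,i;1,j)$.
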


For $L\in\cL_n$, we have
$v_L=\sum_{\sigma\in L}\delta_\sigma$.
By definition, a function $f:S_n\to\R$ has the same sum on every
one-factorization if and only if
\[
   \langle f,v_L-v_{L'}\rangle=0
   \qquad(L,L'\in\cL_n),
\]
or equivalently if and only if
\begin{equation}
   f\in T_n^\perp.
   \label{eq:constant-sum-Tperp}
\end{equation}

\begin{lemma}\label{lem:easy-inclusion}
For every $n\geq2$,
\begin{equation}
   U_n\subseteq T_n^\perp.
   \label{eq:easy-orthogonal-inclusion}
\end{equation}
\end{lemma}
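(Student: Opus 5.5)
Since $U_n$ is spanned by the point-coordinate functions $x_{i,j}$ and $T_n^\perp$ is a linear subspace, it suffices to check that each $x_{i,j}$ lies in $T_n^\perp$. By the characterization \eqref{eq:constant-sum-Tperp}, this means that the sum $\sum_{\sigma\in L}x_{i,j}(\sigma)$ takes the same value for every one-factorization $L\in\cL_n$.

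Fix $i,j\in[n]$ and $L\in\cL_n$. The sum $\sum_{\sigma\in L}x_{i,j}(\sigma)$ counts the permutations $\sigma\in L$ with $\sigma(i)=j$.
\begin{itemize}
\item Any two distinct members of $L$ are disjoint, so their values at $i$ differ. Hence at most one $\sigma\in L$ satisfies $\sigma(i)=j$.
\item $L$ has exactly $n$ members, and their values at $i$ are $n$ distinct elements of $[n]$, so they exhaust $[n]$. Hence at least one $\sigma\in L$ satisfies $\sigma(i)=j$.
\end{itemize}
Therefore $\sum_{\sigma\in L}x_{i,j}(\sigma)=1$ for every $L\in\cL_n$. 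Consequently $\langle x_{i,j},v_L-v_{L'}\rangle=1-1=0$ for all $L,L'\in\cL_n$, that is, $x_{i,j}\in T_n^\perp$.

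Since $T_n^\perp$ is closed under linear combinations, every element of $U_n=\Span_{\R}\{x_{i,j}\}$ lies in $T_n^\perp$, which proves \eqref{eq:easy-orthogonal-inclusion}.

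There is no real obstacle here. The only point needing care is the pigeonhole step: a family of $n$ pairwise disjoint permutations realizes every value exactly once at each position. This is exactly the Latin-square property of a one-factorization noted in the introduction.
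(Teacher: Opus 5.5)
Your proof is correct and follows the paper's argument essentially verbatim: you check each generator $x_{i,j}$, use that the $n$ pairwise disjoint members of $L$ take every value exactly once at position $i$ so that $\langle x_{i,j},v_L\rangle=1$, and conclude by linearity. Your split into ``at most one'' and ``at least one'' is just a more explicit version of the paper's pigeonhole step.
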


\begin{proof}
Fix $i,j\in[n]$ and $L\in\cL_n$.  Since the permutations in $L$ are
pairwise disjoint, the $n$ values $\sigma(i)$, with $\sigma\in L$, are
pairwise distinct and hence comprise all of $[n]$.  Thus exactly one
$\sigma\in L$ satisfies $\sigma(i)=j$, and therefore
\[
   \langle x_{i,j},v_L\rangle
      =\sum_{\sigma\in L}x_{i,j}(\sigma)=1.
\]
Consequently, for all $L,L'\in\cL_n$,
$
   \langle x_{i,j},v_L-v_{L'}\rangle=0.
$
Hence every $x_{i,j}$ is orthogonal to $T_n$.  Since these functions span
$U_n$, we obtain \eqref{eq:easy-orthogonal-inclusion}.
\end{proof}

Lemma~\ref{lem:easy-inclusion} and
\eqref{eq:constant-sum-Tperp} reduce
Theorem~\ref{thm:constant-sum} to the single reverse inclusion
\begin{equation}
   T_n^\perp\subseteq U_n.
   \label{eq:main-reduction}
\end{equation}
They also reduce the equality case in Theorem~\ref{thm:extremal} to the
same statement: by \eqref{eq:saturation}, equality implies
$\mathbf1_{\cF}\in T_n^\perp$; once \eqref{eq:main-reduction} is proved,
Theorem~\ref{thm:boolean-linear} determines $\cF$.

\section{Local transposition differences}

For distinct positions $a,b\in[n]$, define
\begin{equation}
   \Delta_{ab}f(\sigma)
      =f(\sigma)-f(\sigma(a\ b)).
   \label{eq:transposition-difference}
\end{equation}
The quantity $\Delta_{ab}f(\sigma)$ is the \emph{transposition difference}
of $f$ at $\sigma$ in positions $a,b$.  We say that $f:S_n\to\R$ has
\emph{local transposition differences} if,
for every $a\neq b$, there is a function
\[
   D_{ab}:\{(x,y)\in[n]^2:x\neq y\}\longrightarrow\R
\]
such that
\begin{equation}
   \Delta_{ab}f(\sigma)
      =D_{ab}(\sigma(a),\sigma(b))
   \qquad(\sigma\in S_n).
   \label{eq:transposition-property}
\end{equation}
Thus the change in $f$ caused by exchanging positions $a$ and $b$ depends
only on those positions and the two values $\sigma(a),\sigma(b)$.

\subsection{Functions in \texorpdfstring{$T_n^\perp$}{T-n-perp} have local
transposition differences}

We first record the invariance of $T_n$ under composition.

\begin{lemma}\label{lem:translation}
Suppose
\[
   w=\sum_{\sigma\in S_n}c_\sigma\delta_\sigma\in T_n.
\]
Then, for every $g,h\in S_n$,
\[
   \sum_{\sigma\in S_n}c_\sigma\delta_{g\sigma h}\in T_n.
\]
\end{lemma}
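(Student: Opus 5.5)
The argument is linear, so I will define the map $\Psi_{g,h}:\R^{S_n}\to\R^{S_n}$ by $\Psi_{g,h}\bigl(\sum_\sigma c_\sigma\delta_\sigma\bigr)=\sum_\sigma c_\sigma\delta_{g\sigma h}$, and show that $\Psi_{g,h}(T_n)\subseteq T_n$. Since $\Psi_{g,h}$ is linear and $T_n$ is spanned by the differences $v_L-v_{L'}$, it suffices to check the claim on these generators.

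For a single one-factorization $L$, reindexing the sum gives
\[
   \Psi_{g,h}(v_L)=\sum_{\sigma\in L}\delta_{g\sigma h}=\sum_{\tau\in gLh}\delta_\tau=v_{gLh}.
\]
This uses that $\sigma\mapsto g\sigma h$ is a bijection of $S_n$, so $gLh$ has exactly $n$ elements and no coefficients merge. We showed in Section~2 that $gLh\in\cL_n$: for each position $i$, the values $g\sigma h(i)=g(\sigma(h(i)))$ are distinct as $\sigma$ ranges over $L$, because the values $\sigma(h(i))$ are distinct and $g$ is injective. Hence
\[
   \Psi_{g,h}(v_L-v_{L'})=v_{gLh}-v_{gL'h},
\]
and the right-hand side is one of the spanning vectors of $T_n$ in \eqref{eq:Tn}.

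Now write a general $w\in T_n$ as a finite linear combination $w=\sum_k \lambda_k(v_{L_k}-v_{L_k'})$. By linearity of $\Psi_{g,h}$,
\[
   \sum_\sigma c_\sigma\delta_{g\sigma h}=\Psi_{g,h}(w)=\sum_k\lambda_k\bigl(v_{gL_kh}-v_{gL_k'h}\bigr)\in T_n.
\]
This holds regardless of how $w$ is represented as such a combination, because $\Psi_{g,h}$ is defined directly on coefficient vectors.

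There is no real obstacle here. The only point needing care is that $\Psi_{g,h}$ is well defined and linear on $\R^{S_n}$, which is why I define it on the basis $\{\delta_\sigma\}$ first, before restricting to $T_n$.
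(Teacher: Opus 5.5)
Your proposal is correct and follows the paper's proof: reduce by linearity to the spanning vectors $v_L-v_{L'}$, observe that these are sent to $v_{gLh}-v_{gL'h}$, and use that $gLh,gL'h\in\cL_n$. Your extra care in defining $\Psi_{g,h}$ on the basis $\{\delta_\sigma\}$ and re-checking that $gLh$ is a one-factorization only spells out details the paper leaves implicit.
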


\begin{proof}
It is enough to check a spanning vector $w=v_L-v_{L'}$ of $T_n$.  In that
case the displayed function is
$
   v_{gLh}-v_{gL'h},
$
which belongs to $T_n$ because $gLh,gL'h\in\cL_n$.
\end{proof}

The next relation is the local input for the proof.

\begin{lemma}\label{lem:four-term}
Let $n\geq4$.  Fix $A=\{a,b\}\subseteq[n]$, let $\alpha=(a\ b)$, and let
$\beta\in S_n$ fix $a$ and $b$ and have no fixed point in
$[n]\setminus A$.  Then
\begin{equation}
   \delta_e+\delta_{\alpha\beta}
      -\delta_\alpha-\delta_\beta\in T_n.
   \label{eq:four-term}
\end{equation}
\end{lemma}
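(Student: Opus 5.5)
The plan is to realize the four-term vector directly as a single spanning vector $v_L-v_{L'}$ of $T_n$. We take two one-factorizations $L\ni e,\alpha\beta$ and $L'\ni\alpha,\beta$ that agree outside these pairs. Then
\[
   v_L-v_{L'}=\delta_e+\delta_{\alpha\beta}-\delta_\alpha-\delta_\beta .
\]
This is exactly the idea sketched in the introduction: two pairs of disjoint permutations covering the same edge set, each completed by a common family of $n-2$ further permutations.

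First I check the two pairs are pairs of disjoint permutations. For $e$ and $\alpha\beta$: we have $\alpha\beta(a)=b\neq a$ and $\alpha\beta(b)=a\neq b$. For $i\notin A$, $\alpha\beta(i)=\beta(i)\neq i$ because $\beta$ has no fixed point outside $A$ (and $\beta$ preserves $[n]\setminus A$, so $\alpha$ acts trivially there). For $\alpha$ and $\beta$: $\alpha(a)=b\neq a=\beta(a)$, similarly at $b$, and for $i\notin A$, $\alpha(i)=i\neq\beta(i)$.

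Next I compare edge sets in $K_{n,n}$. The set $M_e\cup M_{\alpha\beta}$ consists of $(i,i)$ for all $i$, together with $(a,b),(b,a)$ and $(i,\beta(i))$ for $i\notin A$. The set $M_\alpha\cup M_\beta$ consists of $(a,b),(b,a)$ and $(i,i)$ for $i\notin A$ (from $\alpha$), together with $(a,a),(b,b)$ and $(i,\beta(i))$ for $i\notin A$ (from $\beta$). These coincide; call the common set $H$. Since each pair is disjoint, $H$ is a $2$-regular spanning subgraph of $K_{n,n}$.

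The only substantive step is the completion. The complement $K_{n,n}\setminus H$ is an $(n-2)$-regular bipartite graph, so by K\H{o}nig's edge-colouring theorem (equivalently, repeated application of Hall's theorem) it decomposes into $n-2$ perfect matchings. These are pairwise disjoint permutations $\pi_1,\dots,\pi_{n-2}$ whose matchings avoid $H$. We then set
\[
   L=\{e,\alpha\beta,\pi_1,\dots,\pi_{n-2}\},\qquad L'=\{\alpha,\beta,\pi_1,\dots,\pi_{n-2}\}.
\]
Each is a set of $n$ pairwise disjoint permutations, hence lies in $\cL_n$, and $v_L-v_{L'}$ is the vector in \eqref{eq:four-term}, which therefore lies in $T_n$.

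The hypothesis $n\geq4$ is used only to guarantee that $\beta$ exists, since a fixed-point-free permutation of $n-2$ points needs $n-2\geq2$. I expect no real obstacle beyond correctly invoking the regular bipartite decomposition theorem.
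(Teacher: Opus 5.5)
Your proposal is correct and follows essentially the same route as the paper: you check that $\{e,\alpha\beta\}$ and $\{\alpha,\beta\}$ are disjoint pairs covering the same $2n$ edges of $K_{n,n}$, then complete both with a common decomposition of the $(n-2)$-regular bipartite complement into perfect matchings (via Hall/K\H{o}nig), so the four-term vector is $v_{L_+}-v_{L_-}\in T_n$.
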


\begin{proof}
The permutations $\alpha$ and $\beta$ have disjoint supports and hence
commute.  The permutation $\alpha\beta$ has no fixed point: $\alpha$ moves
$a,b$, while $\beta$ moves every point outside $A$.  Hence $e$ and
$\alpha\beta$ are disjoint.  Also, if $i\in A$, then
$\alpha(i)\neq i=\beta(i)$, whereas if $i\notin A$, then
$\alpha(i)=i\neq\beta(i)$.  Thus $\alpha$ and $\beta$ are disjoint.
Moreover, for every $i\in[n]$,
$
   \{i,\alpha\beta(i)\}=\{\alpha(i),\beta(i)\}.
$
Thus the two pairs cover the same set of $2n$ edges of $K_{n,n}$.

Delete these edges from $K_{n,n}$.  The remaining graph is
$(n-2)$-regular and bipartite.  Repeated application of Hall's theorem
decomposes it into perfect matchings; let $Q$ be such a decomposition.
Then
\[
   L_+=Q\cup\{e,\alpha\beta\},
   \qquad
   L_-=Q\cup\{\alpha,\beta\}
\]
are one-factorizations.  Therefore
$
   v_{L_+}-v_{L_-}
      =\delta_e+\delta_{\alpha\beta}
         -\delta_\alpha-\delta_\beta\in T_n.
$
\end{proof}

A permutation with no fixed point is a \emph{derangement}.

\begin{lemma}\label{lem:transposition-nonexceptional}
Let $f\in T_n^\perp$.  If $n=4$ or $n\geq6$, then $f$ has local
transposition differences.
\end{lemma}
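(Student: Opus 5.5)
The plan is to turn the four-term relation of Lemma~\ref{lem:four-term} into an invariance of $\Delta_{ab}f$ under right multiplication by derangements of $[n]\setminus\{a,b\}$. Then a group-generation argument shows that $\Delta_{ab}f(\sigma)$ can depend only on $\sigma(a),\sigma(b)$.

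\emph{Step 1: invariance.} Fix $a\ne b$, put $\alpha=(a\ b)$, and let $\beta$ fix $a,b$ and have no fixed point outside $A=\{a,b\}$. Since $n\ge4$, Lemma~\ref{lem:four-term} gives $\delta_e+\delta_{\alpha\beta}-\delta_\alpha-\delta_\beta\in T_n$. Applying Lemma~\ref{lem:translation} with $g=\sigma$ and $h=e$ gives
\[
   \delta_\sigma+\delta_{\sigma\alpha\beta}-\delta_{\sigma\alpha}-\delta_{\sigma\beta}\in T_n .
\]
Pairing with $f\in T_n^\perp$ and using $\alpha\beta=\beta\alpha$ yields
\[
   f(\sigma)-f(\sigma\alpha)=f(\sigma\beta)-f(\sigma\beta\alpha),
   \qquad\text{that is,}\qquad
   \Delta_{ab}f(\sigma)=\Delta_{ab}f(\sigma\beta).
\]
Let $H\le S_n$ be the subgroup generated by all such $\beta$. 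Iterating, $\Delta_{ab}f$ is constant on each right coset $\sigma H$.

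\emph{Step 2: identify the cosets.} The permutations $\tau$ with $\tau(a)=\sigma(a)$ and $\tau(b)=\sigma(b)$ are exactly $\sigma K$, where $K\cong S_{n-2}$ is the pointwise stabilizer of $a$ and $b$. So it suffices to show $H=K$. Then $\Delta_{ab}f(\sigma)$ depends only on $(\sigma(a),\sigma(b))$, and we define $D_{ab}$ accordingly. Every pair $(x,y)$ with $x\ne y$ is realized by some $\sigma$, so $D_{ab}$ is defined on its whole domain.

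\emph{Step 3: generation, the main point.} Write $m=n-2$. We need that the derangements of an $m$-set generate $S_m$.
\begin{itemize}
\item For $m=2$ (so $n=4$), the only derangement is the transposition, which generates $S_2$.
\item For $m\ge4$ (so $n\ge6$), the set of derangements is closed under conjugation. Hence $H$ is a nontrivial normal subgroup of $S_m$, so $H$ is $A_m$, $S_m$, or, when $m=4$, the Klein group $V_4$. It therefore suffices to exhibit an odd derangement that is not in $V_4$.
\begin{itemize}
\item If $m$ is even, an $m$-cycle is an odd derangement, and for $m=4$ it is not in $V_4$.
\item If $m\ge5$ is odd, a transposition times a disjoint $(m-2)$-cycle is a derangement. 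It is odd, since the $(m-2)$-cycle has odd length and is therefore even.
\end{itemize}
\end{itemize}
So $H=K$ in both ranges.

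The exclusion of $n=5$ is explained by this step: for $m=3$ the derangements are the $3$-cycles, which generate only $A_3$. There the argument only shows that $\Delta_{ab}f$ is constant on $A_3$-cosets, so $n=5$ needs separate treatment.
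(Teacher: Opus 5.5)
Your proof is correct and follows the paper's argument. The four-term relation, translated by $\sigma$, gives invariance of $\Delta_{ab}f$ under right multiplication by derangements of $[n]\setminus\{a,b\}$; these generate the pointwise stabilizer $K$ of $a$ and $b$, and the cosets $\sigma K$ are exactly the fibres of $\sigma\mapsto(\sigma(a),\sigma(b))$. The only difference is that you prove the generation fact yourself, via the normal subgroups of $S_m$ and an explicit odd derangement, whereas the paper cites it as well known.
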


\begin{proof}
Fix distinct $a,b\in[n]$, let $A=\{a,b\}$ and
$B=[n]\setminus A$, and set $\alpha=(a\ b)$.  Let $\beta$ fix $A$ and be
a derangement on $B$.  Lemmas~\ref{lem:translation}
and~\ref{lem:four-term} give
\[
   \delta_g+\delta_{g\alpha\beta}
      -\delta_{g\alpha}-\delta_{g\beta}\in T_n
   \qquad(g\in S_n).
\]
Taking the inner product with $f$ and using $\alpha\beta=\beta\alpha$
gives
\begin{equation}
   \Delta_{ab}f(g)=\Delta_{ab}f(g\beta).
   \label{eq:invariance-derangement}
\end{equation}

Let $S_B$ be the subgroup of permutations fixing $[n]\setminus B$
pointwise.  If $n\geq6$, then $|B|\geq4$.  It is well known that the
derangements of $S_B$ generate $S_B$, and hence
\eqref{eq:invariance-derangement} implies
\begin{equation}
   \Delta_{ab}f(g)=\Delta_{ab}f(gh)
   \qquad(g\in S_n,\ h\in S_B).
   \label{eq:invariance-complement}
\end{equation}
If $n=4$, then $|B|=2$ and its unique nonidentity permutation is a
derangement, so \eqref{eq:invariance-complement} also holds.

For $g,g'\in S_n$, there is an $h\in S_B$ with $g'=gh$ if and only if
   $g(a)=g'(a)$ and $g(b)=g'(b).$
Indeed, the forward implication is immediate, while the reverse implication
follows because $g^{-1}g'$ fixes $a$ and $b$.  Hence
\eqref{eq:invariance-complement} shows that
$\Delta_{ab}f(g)$ depends only on $g(a)$ and $g(b)$.  Since $a,b$ were
arbitrary, $f$ has local transposition differences.
\end{proof}

For $n=5$, the derangements on the three complementary positions generate
only the alternating group on those positions.  The following relation
provides the missing transpositions.

\begin{lemma}\label{lem:order-five-relation}
If $\alpha$ and $\tau$ are disjoint transpositions in $S_5$, then
\begin{equation}
   \delta_e+\delta_{\alpha\tau}
      -\delta_\alpha-\delta_\tau\in T_5.
   \label{eq:order-five-relation}
\end{equation}
\end{lemma}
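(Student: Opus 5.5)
The approach is to write the target vector as a signed sum of translates of simpler elements of $T_5$. In $S_5$, $\alpha$ and $\tau$ both fix the fifth point, so $e$ and $\alpha\tau$ are not disjoint. The vector in \eqref{eq:order-five-relation} therefore cannot come from a single swap of two pairs inside one one-factorization, as in Lemma~\ref{lem:four-term}.

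\textbf{Step 1: the exchange relations.} Let $x,y$ be disjoint, so $y=x\delta$ with $\delta$ a derangement. Suppose $\delta=\delta_1\delta_2$, where $\delta_1,\delta_2$ are products of disjoint groups of cycles of $\delta$. Then $x\delta_1$ and $x\delta_2$ are disjoint and cover the same $2n$ edges as $x,y$. Deleting these edges leaves an $(n-2)$-regular bipartite graph, which decomposes into perfect matchings by Hall's theorem, exactly as in Lemma~\ref{lem:four-term}. This yields
\[
\delta_g+\delta_{g\delta_1\delta_2}-\delta_{g\delta_1}-\delta_{g\delta_2}\in T_5 .
\]
For $n=5$ the only useful derangement type is $2+3$. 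Write $\alpha=(a\ b)$, $\tau=(c\ d)$, and let $p$ be the fifth point. The available generators are then:
\begin{itemize}
\item relations with transposition $\alpha$ and a $3$-cycle on $\{c,d,p\}$;
\item relations with transposition $\tau$ and a $3$-cycle on $\{a,b,p\}$;
\item all the analogous relations for the other transpositions.
\end{itemize}
By Lemma~\ref{lem:translation}, each may be translated on the left and on the right.

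\textbf{Step 2: reformulate in terms of $\Delta$.} Pairing with $f\in T_5^\perp$, the generators say two things. First, $\Delta_{ab}f(g)$ is unchanged under $g\mapsto g\rho$ for $3$-cycles $\rho$ on $\{c,d,p\}$. Second, $\Delta_{cd}f(g)$ is unchanged under $3$-cycles on $\{a,b,p\}$, and similarly for every other pair of positions. The goal is $\Delta_{ab}f(e)=\Delta_{ab}f(\tau)$, i.e.\ invariance under the odd permutation $\tau$. This is exactly what the $A_3$-invariance alone cannot supply.

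\textbf{Step 3: combine invariances for different pairs.} The plan is to use invariances for several different position pairs, such as $\{a,p\}$, $\{b,p\}$ and $\{c,d\}$, and chain them. Concretely, I would look for a short cycle of permutations
\[
e=g_0,g_1,\dots,g_k=\tau
\]
and the parallel cycle $\alpha g_0,\dots,\alpha g_k$. Each step $g_{i}\to g_{i+1}$ should be covered by one generator, possibly with $g$ on the left or right, or with the roles of the transposition and the $3$-cycle exchanged. The intermediate terms should telescope, leaving $\delta_e+\delta_{\alpha\tau}-\delta_\alpha-\delta_\tau$.

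A natural first attempt exploits the identity
\[
(c\ d)=(c\ p)(d\ p)(c\ p),
\]
which writes $\tau$ through transpositions meeting $p$. Conjugating by elements of $\langle\alpha\rangle\times S_{\{c,d,p\}}$ lets us move between the $3$-cycle relations attached to $\alpha$ and those attached to $(c\ p)$ and $(d\ p)$. I would also use right translations, i.e.\ relabelling values, since these convert a position-transposition into a value-transposition and give genuinely new relations. The symmetry group preserving the target is generated by $\alpha$, $\tau$, the swap $\{a,b\}\leftrightarrow\{c,d\}$, and left/right action by these. Restricting attention to its orbits should keep the search small.

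\textbf{Main obstacle.} The hard part is finding the explicit combination. A parity obstruction means it must involve relations for at least two different transposition pairs, so no combination using only Lemma~\ref{lem:four-term}-type relations for the single pair $\{a,b\}$ can work. If a hand-found telescoping chain does not emerge quickly, I would fall back on a finite verification. The $120$ basis vectors $\delta_\sigma$ and the finitely many exchange relations form a small linear system over $\mathbb Q$. It suffices to exhibit a rational solution expressing the target in their span, which can then be written out and checked as an explicit certificate.
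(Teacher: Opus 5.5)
Your plan cannot succeed, because every generator you allow lies in a proper subspace of $T_5$ that does not contain the target. Each relation from your Step~1 has the form $\delta_x+\delta_{xtc}-\delta_{xt}-\delta_{xc}$, with $t$ a transposition and $c$ a $3$-cycle on the complementary three points. You are right that $2+3$ is the only splittable derangement type in $S_5$. Left or right translation merely replaces $x$ and conjugates $t,c$, so it stays inside this family. Now pair with the sign function $\operatorname{sgn}\in\R^{S_5}$:
\[
\langle\operatorname{sgn},\delta_x+\delta_{xtc}-\delta_{xt}-\delta_{xc}\rangle=\operatorname{sgn}(x)\bigl(1-\operatorname{sgn}(t)\bigr)\bigl(1-\operatorname{sgn}(c)\bigr)=0,
\]
because $c$ is even. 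By contrast, $\langle\operatorname{sgn},\delta_e+\delta_{\alpha\tau}-\delta_\alpha-\delta_\tau\rangle=1+1+1+1=4$.

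In the language of your Step~2, $f=\operatorname{sgn}$ satisfies every invariance you list, for every pair of positions: $\Delta_{ab}\operatorname{sgn}(g)=2\operatorname{sgn}(g)$ is unchanged under right multiplication by $3$-cycles. Yet $\Delta_{ab}\operatorname{sgn}(e)=2\neq-2=\Delta_{ab}\operatorname{sgn}(\tau)$. So no telescoping chain exists, however many position pairs you combine, and the rational linear system in your fallback is infeasible. The parity obstruction you noticed for a single pair is in fact global. Concretely, the span of the $2+3$ exchange relations is the two-sided ideal of $\R[S_5]$ belonging to the partitions $(3,2),(3,1,1),(2,2,1),(2,1,1,1)$. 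It has dimension $102$, while $\dim T_5=103$, and the missing direction is exactly the sign component.

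What you need is a relation in $T_5$ that does not come from splitting a single derangement, that is, a pair of one-factorizations differing in at least three permutations. The paper supplies this directly. It lists four explicit one-factorizations $L_1,\dots,L_4$ of $K_{5,5}$ with $v_{L_1}+v_{L_2}-v_{L_3}-v_{L_4}=\delta_e+\delta_{(1\,2)(3\,4)}-\delta_{(1\,2)}-\delta_{(3\,4)}$. Here $L_1,L_3$, and likewise $L_2,L_4$, differ in three permutations each. It then conjugates by a $\pi$ with $\alpha=\pi(1\ 2)\pi^{-1}$ and $\tau=\pi(3\ 4)\pi^{-1}$. Your computational fallback would only work after you enlarge the generating set to general differences $v_L-v_{L'}$, which amounts to searching for such a certificate.
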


\begin{proof}
It is enough to take $\alpha=(1\ 2)$ and $\tau=(3\ 4)$.  We use the following
notation: a word $a_1a_2a_3a_4a_5$ represents the permutation
$\sigma\in S_5$ satisfying
\[
   (\sigma(1),\sigma(2),\sigma(3),\sigma(4),\sigma(5))
      =(a_1,a_2,a_3,a_4,a_5).
\]
Thus, for example, $23154$ represents the permutation with
$(\sigma(1),\ldots,\sigma(5))=(2,3,1,5,4)$.  Each $L_r$ below is a
one-factorization written in this notation:
\begin{align*}
L_1&=\{12345,23154,35412,41523,54231\},\\
L_2&=\{13524,21435,35241,42153,54312\},\\
L_3&=\{12435,23154,35241,41523,54312\},\\
L_4&=\{13524,21345,35412,42153,54231\}.
\end{align*}
Thus,
\begin{align*}
   v_{L_1}+v_{L_2}-v_{L_3}-v_{L_4}
      &=\delta_{12345}+\delta_{21435}
         -\delta_{12435}-\delta_{21345}\\
      &=\delta_e+\delta_{(1\ 2)(3\ 4)}
         -\delta_{(3\ 4)}-\delta_{(1\ 2)}.
\end{align*}
The left-hand side is
$(v_{L_1}-v_{L_3})+(v_{L_2}-v_{L_4})\in T_5$.

For arbitrary disjoint transpositions $\alpha,\tau$, choose
$\pi\in S_5$ such that
\[
   \alpha=\pi(1\ 2)\pi^{-1},
   \qquad
   \tau=\pi(3\ 4)\pi^{-1}.
\]
Apply Lemma~\ref{lem:translation} with $g=\pi$ and $h=\pi^{-1}$ to obtain
\eqref{eq:order-five-relation}.
\end{proof}

\begin{proposition}\label{prop:Tperp-transposition}
Let $n\geq4$.  Every $f\in T_n^\perp$ has local transposition differences.
\end{proposition}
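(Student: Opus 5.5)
The cases $n=4$ and $n\geq6$ are exactly Lemma~\ref{lem:transposition-nonexceptional}, so only $n=5$ remains. There I would follow the proof of Lemma~\ref{lem:transposition-nonexceptional}. Fix distinct $a,b\in[5]$, put $A=\{a,b\}$, $B=[5]\setminus A$ and $\alpha=(a\ b)$, and aim for the invariance \eqref{eq:invariance-complement}: $\Delta_{ab}f(g)=\Delta_{ab}f(gh)$ for all $g\in S_5$ and $h\in S_B$. Once this holds, the closing argument of Lemma~\ref{lem:transposition-nonexceptional} applies verbatim: $g$ and $g'$ differ by right multiplication by an element of $S_B$ exactly when $g(a)=g'(a)$ and $g(b)=g'(b)$, so $\Delta_{ab}f(g)$ depends only on $(g(a),g(b))$.

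For the invariance I would use two sources of relations.

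\textbf{The 3-cycles on $B$.} Since $|B|=3$, the derangements $\beta$ of $S_B$ are the two $3$-cycles on $B$. Lemma~\ref{lem:four-term} holds for $n=5$, and combined with Lemma~\ref{lem:translation} it gives \eqref{eq:invariance-derangement} for each such $\beta$. Hence $\Delta_{ab}f(g)=\Delta_{ab}f(gh)$ for all $h$ in the alternating group on $B$.

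\textbf{A transposition on $B$.} Let $\tau$ be any transposition supported in $B$; it is disjoint from $\alpha$ and commutes with it. Lemma~\ref{lem:order-five-relation} gives $\delta_e+\delta_{\alpha\tau}-\delta_\alpha-\delta_\tau\in T_5$. Translating on the left by $g$ via Lemma~\ref{lem:translation} gives $\delta_g+\delta_{g\alpha\tau}-\delta_{g\alpha}-\delta_{g\tau}\in T_5$. Pairing with $f\in T_5^\perp$ yields
\[
f(g)-f(g\alpha)=f(g\tau)-f(g\tau\alpha),
\]
using $g\alpha\tau=g\tau\alpha$. In other words, $\Delta_{ab}f(g)=\Delta_{ab}f(g\tau)$.

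The $3$-cycles on $B$ together with one transposition on $B$ generate $S_B\cong S_3$, and invariance under right multiplication by each generator iterates to invariance under the whole group. This gives \eqref{eq:invariance-complement}.

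The only real obstacle is the missing odd permutations of $S_B$ when $n=5$, and Lemma~\ref{lem:order-five-relation} was set up precisely to supply them. So I expect no further difficulty beyond checking that the commutation $\alpha\tau=\tau\alpha$ makes the translated relation read exactly as the invariance under $\tau$.
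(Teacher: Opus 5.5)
Your proposal is correct and matches the paper's proof: for $n=5$ you left-translate Lemma~\ref{lem:order-five-relation} and pair with $f$ to get $\Delta_{ab}f(g)=\Delta_{ab}f(g\tau)$ for transpositions $\tau$ on $B$, then finish with the closing argument of Lemma~\ref{lem:transposition-nonexceptional}. Your separate $3$-cycle step is harmless but redundant, since you derive the invariance for every transposition on $B$ and these alone generate $S_B$, which is how the paper argues.
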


\begin{proof}
Lemma~\ref{lem:transposition-nonexceptional} proves the assertion for $n=4$ and
for $n\geq6$.  It remains to consider $n=5$.

Fix distinct $a,b\in[5]$, let $B=[5]\setminus\{a,b\}$, and set
$\alpha=(a\ b)$.  For every transposition $\tau$ supported on $B$,
Lemma~\ref{lem:order-five-relation} and its left translate by $g\in S_5$
give
\[
   \delta_g+\delta_{g\alpha\tau}
      -\delta_{g\alpha}-\delta_{g\tau}\in T_5.
\]
Taking the inner product with $f\in T_5^\perp$ and using
$\alpha\tau=\tau\alpha$ yields
$
   \Delta_{ab}f(g)=\Delta_{ab}f(g\tau).
$
The transpositions supported on $B$ generate $S_B$.  Hence
$\Delta_{ab}f$ is invariant under right composition by every member of
$S_B$.  As in the final paragraph of the proof of
Lemma~\ref{lem:transposition-nonexceptional}, this means that
$\Delta_{ab}f(g)$ depends only on $g(a)$ and $g(b)$.  Since $a,b$ were
arbitrary, $f$ has local transposition differences.
\end{proof}

\subsection{Functions with local transposition differences are in
\texorpdfstring{$U_n$}{U-n}}

We first isolate a finite cocycle calculation.

\begin{lemma}\label{lem:difference}
Let $X$ be a set with $|X|\geq4$, and let
\[
   d:\{(x,y)\in X^2:x\neq y\}\longrightarrow\R
\]
satisfy $d(y,x)=-d(x,y)$.  Suppose that, for every $x\neq z$, the value
$
   d(x,y)+d(y,z)
$
is independent of $y\in X\setminus\{x,z\}$.  Then there is a function
$u:X\to\R$ such that
\[
   d(x,y)=u(x)-u(y)
   \qquad(x\neq y).
\]
\end{lemma}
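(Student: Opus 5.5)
Write $c(x,z)$ for the common value of $d(x,y)+d(y,z)$ over $y\in X\setminus\{x,z\}$. The plan is to show that $c(x,z)=d(x,z)$ for all $x\neq z$, i.e.\ that $d$ satisfies the cocycle identity
\[
   d(x,y)+d(y,z)=d(x,z)\qquad(x,y,z\text{ distinct}).
\]
Once this holds, fix a base point $x_0\in X$ and define $u(x_0)=0$ and $u(x)=d(x,x_0)$ for $x\neq x_0$. For distinct $x,y$ both different from $x_0$, the identity with $z=x_0$ gives $d(x,y)=d(x,x_0)-d(y,x_0)=u(x)-u(y)$. If one of them equals $x_0$, the formula follows from the definition of $u$ and from antisymmetry.

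To prove the cocycle identity, first note that antisymmetry gives $c(z,x)=-c(x,z)$, since $d(z,y)+d(y,x)=-(d(x,y)+d(y,z))$. Now fix four distinct elements $x,y,z,w$; this is possible since $|X|\geq4$. Evaluate $c(x,z)$ in two ways, once through $y$ and once through $w$:
\[
   d(x,y)+d(y,z)=d(x,w)+d(w,z).
\]
This alone does not produce $d(x,z)$, so I would compare several such relations. Consider the sum $S=d(x,y)+d(y,z)+d(z,x)$ over the triangle $x,y,z$. By hypothesis $d(x,y)+d(y,z)=c(x,z)$, so $S=c(x,z)-d(x,z)$.

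The key step is to show $S=0$. Let $e(x,z)=c(x,z)-d(x,z)$; it is antisymmetric, and $S(x,y,z)=e(x,z)$ depends only on $x,z$. Since $S$ is invariant under cyclic rotation of $(x,y,z)$, we also get $S=e(y,x)=e(z,y)$. Therefore $e(x,z)=e(y,x)=e(z,y)$ for all distinct $x,y,z$. Applying the same relation to the reversed triangle $(z,y,x)$ gives $e(z,x)=e(y,z)=e(x,y)$, and by antisymmetry $e(z,x)=-e(x,z)$. Hence $e(x,y)=-e(x,z)$ whenever $x,y,z$ are distinct.

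Now use a fourth point $w$. The relation above gives
\[
   e(x,y)=-e(x,z),\qquad e(x,w)=-e(x,z),\qquad e(x,y)=-e(x,w).
\]
The first two give $e(x,y)=e(x,w)$, and combining with the third yields $e(x,y)=0$. Thus $e\equiv0$, which is exactly the cocycle identity, and the construction of $u$ above finishes the proof.

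I expect the main obstacle to be this sign bookkeeping: the symmetry argument must genuinely use $|X|\geq4$. With only three points, the relation $e(x,y)=-e(x,z)$ is consistent with a nonzero $e$, and the counterexample is the "rotation" $d$ on a 3-cycle. The construction of $u$ from the cocycle identity is routine.
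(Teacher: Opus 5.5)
Your proof is correct, but it takes a somewhat different route from the paper's.

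\textbf{The paper's route.} The paper treats the two-step value $H(x,z)$ (your $c(x,z)$) as the primary object.
\begin{enumerate}
\item Using an auxiliary fourth point $w$, it checks that $H$ is antisymmetric and satisfies $H(x,y)+H(y,z)=H(x,z)$.
\item It builds the potential $u$ from $H$, not from $d$.
\item Only then does it show that the error $d-H$ vanishes. It deduces $e(x,y)+e(y,z)=0$, uses a fourth point a second time to see that $e(x,y)$ depends only on $y$, and finishes with the odd-cycle argument $c_x=-c_y=c_z=-c_x$.
\end{enumerate}

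\textbf{Your route.} You show directly that $c=d$, i.e.\ that $d$ itself satisfies the cocycle identity.
\begin{enumerate}
\item Cyclic invariance of the triangle sum $S(x,y,z)$, together with reversal and antisymmetry, gives $e(x,y)=-e(x,z)$ using only three points.
\item The fourth point enters exactly once, in the odd cycle $e(x,y)=-e(x,z)=e(x,w)=-e(x,y)$.
\item The potential is then read off from $d$ itself.
\end{enumerate}

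\textbf{Comparison.} Your version is slightly more economical: it needs no separate cocycle verification for $H$ and uses $|X|\geq4$ only once. The paper's version makes explicit that the two-step value is automatically a coboundary. It also isolates the obstruction $d-H$ as a function that antisymmetry forces to vanish. Your closing remark about the rotation on a $3$-cycle correctly identifies why four points are needed.
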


\begin{proof}
For $x\neq z$, define
$
   H(x,z)=d(x,y)+d(y,z),
$
where $y\notin\{x,z\}$.  The hypothesis makes $H$ well defined, and
$H(z,x)=-H(x,z)$.  If $x,y,z$ are distinct, choose
$w\notin\{x,y,z\}$.  Then
\begin{align*}
   H(x,y)+H(y,z)
      =d(x,w)+d(w,y)+d(y,w)+d(w,z)
      =H(x,z).
\end{align*}
Fix $x_0\in X$, set $u(x_0)=0$, and set
$u(x)=H(x,x_0)$ for $x\neq x_0$.  If $x,z\neq x_0$, then the cocycle
identity gives
$H(x,z)+H(z,x_0)=H(x,x_0)$.  If $z=x_0$, the desired identity follows
from the definition of $u$, and if $x=x_0$, it follows from the
antisymmetry of $H$.  Consequently,
\[
   H(x,z)=u(x)-u(z)
   \qquad(x\neq z).
\]

Let $e(x,y)=d(x,y)-H(x,y)$.  For distinct $x,y,z$,
$
   e(x,y)+e(y,z)=0.
$
Fix $y$, and let $x,x'\in X\setminus\{y\}$.  Choose
$z\notin\{x,x',y\}$, which is possible because $|X|\geq4$.  Applying the
last identity to $(x,y,z)$ and $(x',y,z)$ gives
$e(x,y)=e(x',y)$.  Thus $e(x,y)$ is independent of $x\neq y$; write this
common value as $c_y$.  Antisymmetry gives
$c_y=-c_x$ whenever $x\neq y$.  Taking three distinct elements gives
$c_x=-c_y=c_z=-c_x$, and hence every $c_x$ is zero.  Thus
$e=0$ and $d(x,y)=u(x)-u(y)$.
\end{proof}

\begin{proposition}\label{prop:transposition-additive}
Let $n\geq4$, and let $f:S_n\to\R$ have local transposition differences.
Then there are $c\in\R$ and coefficients $a_{i,x}\in\R$ such that
\begin{equation}
   f(\sigma)=c+\sum_{i=1}^n a_{i,\sigma(i)}
   \qquad(\sigma\in S_n).
   \label{eq:additive-form}
\end{equation}
In particular, $f\in U_n$.
\end{proposition}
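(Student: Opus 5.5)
We plan to apply Lemma~\ref{lem:difference} to each $D_{ab}$, obtaining potentials $u_{ab}$ with $D_{ab}(x,y)=u_{ab}(x)-u_{ab}(y)$. We then show that these potentials fit together into one-variable coefficients $a_{i,x}$.

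First, the hypotheses of Lemma~\ref{lem:difference} for $d=D_{ab}$ on $X=[n]$. Antisymmetry follows from the definition, since $\Delta_{ab}f(\sigma(a\ b))=-\Delta_{ab}f(\sigma)$, and $\sigma(a\ b)$ has values $(\sigma(b),\sigma(a))$ at $(a,b)$. For the cocycle condition, fix $x\neq z$ and $y\notin\{x,z\}$. Pick a third position $c\notin\{a,b\}$, which exists since $n\geq4$. Take $\sigma$ with $\sigma(a)=x$, $\sigma(b)=z$ and $\sigma(c)=y$.

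The transposition $(a\ b)$ factors as $(a\ c)(c\ b)(a\ c)$. Following $\sigma$ through these three steps, the differences telescope:
\[
f(\sigma)-f(\sigma(a\ b))=D_{ac}(x,y)+D_{cb}(x,z)+D_{ac}(y,z),
\]
with arguments read off from the successive value patterns. This identity involves $D_{ac}$ and $D_{cb}$, not $D_{ab}$ with varying $y$, so it does not directly give the hypothesis of Lemma~\ref{lem:difference}. I expect this bookkeeping to be the main obstacle.

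My first attempt is to use a cleaner relation. Compare $\sigma$ and $\sigma'$ that agree at $a$ and $b$ but have swapped values at positions $c,d$ outside $\{a,b\}$. Since $(a\ b)$ and $(c\ d)$ commute, the square $\sigma,\sigma(a\ b),\sigma(c\ d),\sigma(a\ b)(c\ d)$ gives a consistency condition. That condition is automatically satisfied, so it does not help.

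Instead I will use the three-step identity as a condition on $D_{ac}$. Rewrite it as
\[
D_{ab}(x,z)-D_{cb}(x,z)=D_{ac}(x,y)+D_{ac}(y,z).
\]
The left side does not depend on $y=\sigma(c)$. Hence $D_{ac}(x,y)+D_{ac}(y,z)$ is independent of $y\notin\{x,z\}$. Since $a,c$ are arbitrary distinct positions, this is exactly the hypothesis of Lemma~\ref{lem:difference} for every $D_{pq}$. We therefore get $D_{pq}(x,y)=u_{pq}(x)-u_{pq}(y)$.

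Next we extract $a_{i,x}$. Substituting the potentials into the identity gives
\[
u_{ab}(x)-u_{ab}(z)=u_{cb}(x)-u_{cb}(z)+u_{ac}(x)-u_{ac}(z).
\]
So $u_{ab}-u_{cb}-u_{ac}$ is constant. Each $u_{pq}$ is determined only up to an additive constant. We normalise by fixing a base position $1$. Set $a_{1,x}=0$ and $a_{i,x}=u_{i1}(x)$ for $i\neq1$, adjusting constants so that $u_{pq}=a_{p,\cdot}-a_{q,\cdot}+\text{const}$; the relation above makes this consistent.

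Finally, define $g(\sigma)=f(\sigma)-\sum_i a_{i,\sigma(i)}$. A transposition in positions $p,q$ changes $\sum_i a_{i,\sigma(i)}$ by
\[
a_{p,x}+a_{q,y}-a_{p,y}-a_{q,x}=(a_{p,x}-a_{q,x})-(a_{p,y}-a_{q,y}),
\]
where $x=\sigma(p)$ and $y=\sigma(q)$. This equals $u_{pq}(x)-u_{pq}(y)=D_{pq}(x,y)$, since the constant cancels. Hence $g$ is invariant under every right multiplication by a transposition. Because transpositions generate $S_n$, $g$ is constant, which gives \eqref{eq:additive-form}. The membership $f\in U_n$ then follows because $a_{i,\sigma(i)}=\sum_x a_{i,x}x_{i,x}(\sigma)$ and constants lie in $U_n$.
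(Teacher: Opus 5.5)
Your proof is correct and is essentially the paper's argument. Your factorization $(a\ b)=(a\ c)(c\ b)(a\ c)$ and the resulting telescoped identity are exactly the paper's $(i\ q)=s_i t s_i$ step under $a=i$, $c=i+1$, $b=q$; this yields the hypothesis of Lemma~\ref{lem:difference} for the conjugating pair, and a generating-set argument finishes.

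The only difference is in assembling the coefficients. The paper applies the lemma only to adjacent pairs and sets $a_{i+1,\cdot}=a_{i,\cdot}-u_i$ recursively, so it needs no consistency check. Your base-position choice $a_{i,\cdot}=u_{i1}$ instead uses the extra relation $u_{ab}-u_{ac}-u_{cb}\equiv\text{const}$. That relation does not directly cover ordered pairs $(1,q)$. This does not matter, because you can write that transposition as $(q\ 1)$; alternatively, the star transpositions $(q\ 1)$ already generate $S_n$, and for these the identity holds by construction.
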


\begin{proof}
For $1\leq i\leq n-1$, let $s_i=(i\ i+1)$ and define
\[
   d_i(x,y)=D_{i,i+1}(x,y)
   \qquad(x\neq y).
\]
If $\sigma(i)=x$ and $\sigma(i+1)=y$, then
$
   d_i(x,y)=f(\sigma)-f(\sigma s_i).
$
Applying the same identity to $\sigma s_i$ gives
\begin{equation}
   d_i(y,x)=-d_i(x,y).
   \label{eq:di-antisymmetric}
\end{equation}

Fix $i$ and choose a position $q\notin\{i,i+1\}$.  Let $x,y,z$ be
distinct values, and choose $\sigma\in S_n$ satisfying
\[
   \sigma(i)=x,\qquad
   \sigma(i+1)=y,\qquad
   \sigma(q)=z.
\]
Let $t=(i+1\ q)$.  Since $(i\ q)=s_i t s_i$, telescoping along
\[
   \sigma,\quad
   \sigma s_i,\quad
   \sigma s_i t,\quad
   \sigma s_i t s_i=\sigma(i\ q)
\]
gives
$
   D_{i,q}(x,z)
      =d_i(x,y)+D_{i+1,q}(x,z)+d_i(y,z).
$
Therefore
\begin{equation}
   d_i(x,y)+d_i(y,z)
      =D_{i,q}(x,z)-D_{i+1,q}(x,z),
   \label{eq:two-step-independent}
\end{equation}
which is independent of $y$.  Lemma~\ref{lem:difference} gives a function
$u_i:[n]\to\R$ such that
\begin{equation}
   d_i(x,y)=u_i(x)-u_i(y).
   \label{eq:di-potential}
\end{equation}

Set $a_{1,x}=0$ for every $x$, and recursively define
\[
   a_{i+1,x}=a_{i,x}-u_i(x)
   \qquad(1\leq i\leq n-1).
\]
Let
$
   F(\sigma)=\sum_{k=1}^n a_{k,\sigma(k)}.
$
If $\sigma(i)=x$ and $\sigma(i+1)=y$, then
\begin{align*}
   F(\sigma)-F(\sigma s_i)
      &=a_{i,x}+a_{i+1,y}-a_{i,y}-a_{i+1,x}\\
      &=u_i(x)-u_i(y)\\
      &=f(\sigma)-f(\sigma s_i).
\end{align*}
Thus $f-F$ is invariant under right composition by every adjacent
transposition.  Since the adjacent transpositions generate $S_n$, the
function $f-F$ is constant.  This proves \eqref{eq:additive-form}, whose
right-hand side belongs to $U_n$.
\end{proof}

Combining Propositions~\ref{prop:Tperp-transposition}
and~\ref{prop:transposition-additive}, we have proved the reduction
\eqref{eq:main-reduction} for every $n\geq4$.

\section{Proofs of the main theorems}

\subsection{Proof of Theorem~\ref{thm:constant-sum}}

\begin{proof}
Suppose first that $n\geq4$.  Lemma~\ref{lem:easy-inclusion} gives
$U_n\subseteq T_n^\perp$.  Conversely, if $f\in T_n^\perp$, then
Proposition~\ref{prop:Tperp-transposition} gives local transposition differences,
and Proposition~\ref{prop:transposition-additive} gives $f\in U_n$.
Hence
\begin{equation}
   T_n^\perp=U_n
   \qquad(n\geq4).
   \label{eq:Tperp-Un-large}
\end{equation}

We verify the two smaller orders directly.  For $n=2$, there is a unique
one-factorization, so $T_2=0$.  Each of the two singleton subsets of $S_2$
is a $1$-coset, and therefore $U_2=\R^{S_2}=T_2^\perp$.

Let $n=3$.  Two distinct permutations $\sigma,\tau\in S_3$ are disjoint
if and only if $\sigma^{-1}\tau$ is a derangement.  The two derangements
in $S_3$ are the two $3$-cycles, so distinct permutations are disjoint if
and only if they have the same parity.  It follows that
\[
   L_+=A_3,
   \qquad
   L_-=S_3\setminus A_3
\]
are the two one-factorizations and
$
   T_3=\Span_{\R}\{v_{L_+}-v_{L_-}\}.
$

The space $T_3$ is one-dimensional, so $T_3^\perp$ has dimension $5$.
On the other hand, the five functions
\[
   1,\quad x_{1,1},\quad x_{1,2},\quad x_{2,1},\quad x_{2,2}
\]
belong to $U_3$ and are linearly independent.  Indeed, suppose that
\[
   c+a x_{1,1}+b x_{1,2}+d x_{2,1}+e x_{2,2}=0.
\]
Evaluating this identity at $132$, $231$, $312$, $213$, and $123$, in
that order, gives
\[
   c+a=0,\qquad c+b=0,\qquad c+d=0,
   \qquad c+b+d=0,\qquad c+a+e=0.
\]
The first four equations give $c=a=b=d=0$, and the last then gives
$e=0$.  Hence $\dim U_3\geq5$.  Lemma~\ref{lem:easy-inclusion} now gives
$U_3=T_3^\perp$.

We have proved $T_n^\perp=U_n$ for every $n\geq2$, which is the displayed
identity in the theorem.  By \eqref{eq:constant-sum-Tperp}, it also gives the
constant-sum formulation.  Finally, every function of the displayed additive form
belongs to $U_n$.  Conversely, a linear combination of the generators
$x_{i,j}$ has such a form, while constants already lie in $U_n$.  This
proves the additive formulation as well.

\end{proof}

\subsection{Proof of Theorem~\ref{thm:extremal}}

\begin{proof}
The bound was proved in Proposition~\ref{prop:equality-reduction}.

Assume equality.  By \eqref{eq:saturation}, for every
$L,L'\in\cL_n$,
$
   \langle\mathbf1_{\cF},v_L-v_{L'}\rangle=0.
$
Hence $\mathbf1_{\cF}\in T_n^\perp$.  By
Theorem~\ref{thm:constant-sum}, $\mathbf1_{\cF}\in U_n$.  Since this
function is Boolean, Theorem~\ref{thm:boolean-linear} shows that either
\[
   \cF=\{\sigma\in S_n:\sigma(i)\in J\}
\]
for some $i$ and $J$, or
\[
   \cF=\{\sigma\in S_n:\sigma^{-1}(j)\in I\}
\]
for some $j$ and $I$.  These two families have sizes
$|J|(n-1)!$ and $|I|(n-1)!$, respectively.  Equality therefore forces
$|J|=s-1$ or $|I|=s-1$.

Conversely, let
\[
   \cF=\{\sigma\in S_n:\sigma(i)\in J\},
   \qquad |J|=s-1.
\]
Then $|\cF|=(s-1)(n-1)!$.  If
$\sigma_1,\ldots,\sigma_t$ are pairwise disjoint members of $\cF$, then
$
   \sigma_1(i),\ldots,\sigma_t(i)
$
are distinct elements of $J$, and hence $t\leq s-1$.  Thus
$\nu(\cF)<s$.  The proof for a family of type~\textup{(ii)} is identical,
using the distinct preimages
$\sigma_1^{-1}(j),\ldots,\sigma_t^{-1}(j)$.
\end{proof}

\subsection{Proof of Corollary~\ref{cor:factorization-span-dimensions}}

\begin{proof}
First, $\dim U_n=(n-1)^2+1$.  Indeed, consider the linear map
\[
   \R^{n\times n}\longrightarrow U_n,
   \qquad
   (a_{i,j})\longmapsto
      \left(\sigma\longmapsto\sum_{i=1}^n a_{i,\sigma(i)}\right).
\]
Its kernel consists precisely of the matrices
$a_{i,j}=r_i+c_j$ satisfying
$\sum_i r_i+\sum_j c_j=0$.  Indeed, comparing two permutations that agree
outside positions $i,k$ and transpose the values $j,\ell$ gives the four-cell
relations
$a_{i,j}+a_{k,\ell}=a_{i,\ell}+a_{k,j}$; these relations imply the stated
form, and the converse is immediate.  This kernel has dimension $2n-2$,
proving the stated value of $\dim U_n$.

Let
\[
   W_n^0=\Span_{\R}\{\mathbf1_L-\mathbf1_{L'}:L,L'\in\cL_n\}.
\]
By Theorem~\ref{thm:constant-sum}, $(W_n^0)^\perp=U_n$, and hence
\[
   \dim W_n^0=n!-\dim U_n=n!-(n-1)^2-1.
\]
Every member of $W_n^0$ has total coordinate sum zero, whereas
$\mathbf1_L$ has total coordinate sum $n$.  Moreover, after fixing
$L_0\in\cL_n$, every $\mathbf1_L$ is the sum of $\mathbf1_{L_0}$ and an
element of $W_n^0$.  Thus adjoining $\mathbf1_{L_0}$ contributes exactly
one further dimension, proving the first identity.
\end{proof}

\section{Concluding remarks}

For a family $\cF\subseteq S_n$, define its \emph{covering number} by
\[
   \tau(\cF)
      =\min\bigl\{|X|:X\subseteq[n]\times[n]
          \text{ and }X\cap M_\sigma\neq\varnothing
          \text{ for every }\sigma\in\cF\bigr\}.
\]
A union of $(s-1)$ $1$-cosets has covering number at most $s-1$.
Consequently, the condition $\tau(\cF)\geq s$ is the natural
Hilton--Milner condition for a family with matching number less than $s$.

Let
\[
   \cD_n=\{\pi\in S_n:\pi(i)\neq i\text{ for every }i\in[n]\},
   \qquad
   d_{n,1}=|\{\pi\in\cD_n:\pi(1)=2\}|.
\]
The second quantity is independent of the chosen off-diagonal ordered
pair.  Fix $\eta\in S_n$ with $\eta(1)\notin[s-1]$, and define
\begin{align}
   \cH_{n,s}(\eta)
      =\bigcup_{i=2}^{s-1}S_n(1\mapsto i)\cup
       \{\pi\in S_n(1\mapsto1):M_\pi\cap M_\eta\neq\varnothing\}
       \cup\{\eta\}.
   \label{eq:permutation-HM-family}
\end{align}
This is the union of $(s-2)$ pairwise disjoint $1$-cosets and a permutation
Hilton--Milner family, with $\nu(\cH_{n,s}(\eta))\leq s-1$ and $\tau(\cH_{n,s}(\eta))\geq s$.

The three parts in the definition are pairwise disjoint.  Among the
$(n-1)!$ permutations mapping $1$ to $1$, those disjoint from $\eta$ are in
bijection, via $\pi\mapsto\pi\eta^{-1}$, with the derangements $\delta$
satisfying $\delta(\eta(1))=1$. Therefore
\[
   |\cH_{n,s}(\eta)|
      =(s-1)(n-1)!-d_{n,1}+1.
\]
Inozemtsev, Kolupaev and Kupavskii showed that, in the range below, this
construction has covering number $s$ and is sharp: if
$s\leq n/(2^{17}\log n),$ $ \nu(\cF)<s$ and $\tau(\cF)\geq s,$
then
\begin{equation}
   |\cF|\leq(s-1)(n-1)!-d_{n,1}+1,
   \label{eq:IKK-HM-bound}
\end{equation}
and equality holds only for the images of
$\cH_{n,s}(\eta)$ under left and right composition and inversion
\cite{InozemtsevKolupaevKupavskii}.

The general parameter range remains open.  It is useful to isolate the
problem as follows.

\begin{problem}\label{prob:general-HM}
For $n\geq2$ and $2\leq s\leq n$, determine
\[
   \operatorname{HM}(n,s)
      =\max\bigl\{|\cF|:\cF\subseteq S_n,\
          \nu(\cF)<s,\ \tau(\cF)\geq s\bigr\},
\]
and classify all extremal families.  In particular, determine the full
range of pairs $(n,s)$ for which
\[
   \operatorname{HM}(n,s)
      =(s-1)(n-1)!-d_{n,1}+1
\]
with equality only for the families in
\eqref{eq:permutation-HM-family}, up to the natural symmetries of $S_n$,
and identify the competing constructions when this formula fails.
\end{problem}

\section*{Acknowledgements} The authors are very grateful to Dr. Binzhou Xia for helpful discussions and for valuable suggestions concerning the group-theoretic arguments and their presentation. The authors acknowledge the use of AI tools during the
exploratory stage of this project. All mathematical arguments and proofs
presented in the final manuscript were developed and rigorously verified
by the authors. The authors take full responsibility for the content of
the manuscript.

\end{document}